\newtheorem{theorem}{Theorem}[section]
\newtheorem{definition}[theorem]{Definition}
\newtheorem{corollary}[theorem]{Corollary}
\theoremstyle{definition}
\newtheorem{remark}[theorem]{Remark}
\newcommand{\rn}{\mathbb R^n}
\newcommand{\rd}{\mathbb R^d}
\newcommand{\ed}{\color{black}} 
\newcommand{\KK}{\mathcal{K}}
\def\avgint_#1{\mathchoice{\mathop{\kern 0.2em\vrule width 0.6em height 0.69678ex depth -0.58065ex \kern -0.8em \intop}\nolimits_{\kern -0.4em#1}}{\mathop{\kern 0.1em\vrule width 0.5em height 0.69678ex depth -0.60387ex \kern -0.6em \intop}\nolimits_{#1}} {\mathop{\kern 0.1em\vrule width 0.5em height 0.69678ex depth -0.60387ex \kern -0.6em \intop}\nolimits_{#1}} {\mathop{\kern 0.1em\vrule width 0.5em height 0.69678ex depth -0.60387ex \kern -0.6em \intop}\nolimits_{#1}}}
\begin{document}

\title[Extrapolation of compactness on mixed Lebesgue spaces]{Extrapolation of compactness on weighted mixed Lebesgue spaces}

\author[M. J. Carro]{Mar\'{\i}a J. Carro}
\address{Mar\'{\i}a J. Carro, Department of Analysis and Applied Mathematics, Complutense University  of Madrid, 28040 Madrid, Spain.} 
\email{mjcarro@ucm.es}

\author[C. P\'erez]{Carlos P\'erez}
\address{Carlos P\'erez, Department of Mathematics
University of the Basque Country/Euskal Herriko Unibertsitatea, 48080 Bilbao
Spain. } 
\email{cperez@bcamath.org}

\author[R.H. Torres]{Rodolfo H. Torres}
\address{Rodolfo H. Torres, Department of Mathematics, University of California, Riverside, CA 92521, USA.} 
\email{rodolfo.h.torres@ucr.edu}

\thanks{The first  author has been  partially supported by grants PID2020-113048GB-I00 funded by MCIN/AEI/10.13039/501100011033,  CEX2019-000904-S funded by MCIN/AEI/ 10.13039/501100011033 and Grupo UCM-970966 (Spain).
The second author is supported by grant  PID2020-113156GB-I00, Spanish Government; by the Basque Government through grant 
IT1615-22 and the BERC 2014-2017 program and by BCAM Severo Ochoa accreditation SEV-2013-0323, Spanish Government. }

\subjclass[2010]{42B20, 42B25, 35S05, 46E30, 47B07.}

\keywords{Mixed Lebesgue spaces, Fr\'echet-Kolmogorov, Muckenhoupt weights, compact operators, extrapolation}

\begin{abstract}    A version of the Fr\'echet-Kolmogorov theorem for the compactness of operators in weighted mixed Lebesgue spaces   is used to develop a  corresponding compact extrapolation theory a la Rubio de Francia. Several applications are presented too. 
\end{abstract}

\maketitle

\bigskip
\section{Introduction}

The extrapolation result by Rubio de Francia has become a powerful tool for extending the weighted boundedness of an operator from $L^p(w)$ for every $w\in A_p$ to boundedness from $L^q(w)$ for every $w\in A_q$ (\cite{rf:rf, d:d, cmp2}). 

More recently, several authors have investigated extending Rubio de Francia's extrapolation result to the category of compact operators on weighted Lebesgue spaces. Contributions from Cao, Olivo, and Yabuta \cite{coy:coy})  along with Hyt\"onen and Lappas \cite{hl:hl} (see also  \cite{xyy:xyy}), have established linear, multilinear, and off-diagonal results. The two groups of authors adopt different approaches: while both use a combination of interpolation and extrapolation techniques, the former relies on the classical Fr\'echet-Kolmogorov theorem to characterize precompactness, whereas the latter employs more abstract arguments concerning compact operators. Generally speaking, these works   show that if an operator is compact on a specific initial weighted Lebesgue space, then it is also compact on all corresponding weighted Lebesgue spaces with Muckenhoupt weights.   A more general version of the extrapolation result in the setting of Banach Function Spaces (BFS) satisfying certain properties with respect to the Hardy-Littlewood maximal function was develop by Lorist and Nieraeth \cite{LN:LN}.

In this article, we build upon the results in \cite{cst:cst}, where a notion of uniform compactness was introduced and applied via extrapolation to derive new compactness results for pseudodifferential operators. 
While this assumption of uniform compactness is stronger than the assumptions used in other places in the literature, it provides a very straightforward way to extrapolate compactness via an application of a modern form of the Rubio de Francia extrapolation of continuity result,  is easy to verify in applications, and yields also a stronger uniform compactness conclusion. We will extend these ideas to the context of weighted mixed Lebesgue spaces, defined as follows

\begin{definition}
Given  $1\leq p,q <\infty$ and two weights  $u$ in $ \mathbb R^n$ and  $v$ in  $ \mathbb R^m$, we define the space 
$$
L^p_uL^q_v(\mathbb R^{n+m})=\{ f:\mathbb R^{n+m} \to \mathbb C  \mbox{ measurable}: \|f\|_{L^p_uL^q_v(\mathbb R^{n+m})}<\infty\},$$
where 
$$
\|f\|_{L^p_uL^q_v(\mathbb R^{n+m})}=\left(\int_{\mathbb R^{n}} \left[ \int_{\mathbb R^m }f(x,y)^q v(y)dy\right]^{p/q} u(x) dx \right)^{1/p}.
$$
\end{definition}
The unweighted version of these spaces was  introduced and thoroughly  studied by  A. Benedek and R. Panzone  in \cite{bp:bp}.  See \cite{k:k} and the reference therein for further properties in the weighted case.

 When the dimensions are clear from the context or do not need to be specified, we will simply write $L^p_uL^q_v$. We observe that if $p=q$, then $L^p_uL^q_v=L^p(w)$ is the standard Lebesgue space on $\mathbb R^{n+m}$ with weight $w(x, y)=u(x)v(y)$.

\section{Preliminaries }

\subsection{Weights}
By a weight we mean a non-negative, measurable, and locally integrable function. We recall
that a weight $w$ belongs to the class $A_p$,   with $1< p < \infty$ and $\frac1p+\frac1{p'}=1$ , if
$$ [w]_{A_p}= \sup_Q \left( \avgint_Q w(y)\, dy \right) \left(
\avgint_Q w(y)^{ 1 - p^{\prime} }\, dy \right)^{p-1}
<\infty.$$

  As it is usually done,   we have used here the notation $\avgint_E f \ dx=f_E=\frac{1}{|E|}\int_E f \ dx$ for the average of $f$ over $E$ with respect to the Lebesgue measure.   Likewise, for   a given measure $\mu$ defined for every cube $Q$, we will write $f_{Q,\mu} = \avgint_Q f d\mu:= \frac{1}{\mu(Q)}\int_Q fd\mu$. In the particular case of densities given by a weight $w$, we will write $f_{Q,w}=\frac{1}{w(Q)}\int_Q fwdx$.

  The number $ [w]_{A_p}$ above   is called the $A_p$ constant of $w$. A weight $w$
belongs to the class $A_1$ if there is a constant $C$ such that
$$ \avgint_Q w(y)\, dy \le C\inf_{Q} w,
$$
and the infimum of these constants $C$ is called the $A_1$ constant
of $w$.

We will need the following well known property of the $A_p$ weights.   Using H\"older's inequality with $p$ and its conjugate $p'$, we have that for every cube $Q$ and every $g\geq 0$,
\begin{equation}\label{eq:prop-Ap-function}
\avgint_Q g\ dx\leq[w]^{\frac{1}{p}}_{A_p} \left( \frac1{w(Q)} \int_Q  g^p\,wdx\right )^{\frac{1}{p}}.
\end{equation}
Specializing inequality \eqref{eq:prop-Ap-function} for $g\equiv\chi_{E}$ we obtain that, for any measurable set $E\subset Q$, that
\begin{equation}\label{eq:prop-Ap-set}
w(Q)
\leq [w]_{A_p}
\left (\frac{|Q|}{|E|} \right )^p w(E)
\end{equation}

\ed

Since the $A_p$ classes are increasing with respect to $p$,
the $A_{\infty}$ class of weights is defined in a natural way by
$$A_{\infty} = \cup_{p>1} A_p, 
$$ 
and  it is characterized by means of this constant 
\begin{equation*}
[w]_{A_\infty}:=\sup_Q\frac{1}{w(Q)}\int_Q M(w\chi_Q )\ dx,
\end{equation*}
as considered in \cite{HP} (see also \cite{HPR}).   In fact we will use the following ``precise openness property" in the proof of Theorem \ref{CommCompa} 
whose proof can be found in \cite{HPR}. 

Let $1<p<\infty$ and let $w\in
A_p$. 
Then  $w\in A_{p-\varepsilon} $ where 
\begin{equation}\label{rprime}
\varepsilon :=\frac{p-1}{(r_\sigma)' }= \frac{p-1}{ 2^{n+1} [\sigma]_{A_{\infty}} }, 
\end{equation}
and, as usual, $\sigma=w^{1-p'}$. Furthermore,
\begin{equation}\label{eq:Ap-e}
 [w]_{A_{p-\varepsilon}} \leq 2^{p-1}[w]_{A_{p}}
\end{equation}

The proof of this result is based in the  following ``precise reverse H\"older property":  Let
\begin{equation}\label{romega} 
r_w:=1+\frac{1}{2^{n+1}[w]_{A_\infty}-1}.
\end{equation}
Then for any cube $Q$, we have that
\begin{equation}\label{eq:RHI}
\left ( \avgint_Q  w^{r_w}\,dx \right )^{\frac1{r_w}} \le 2  \avgint_Q w\,dx.
\end{equation}

In the proof of Theorem \ref{CommCompa} we will be considering weights $w$ in the class $A_2$ and then in this case we can replace the $\varepsilon$ from \eqref{eq:Ap-e} by 
$$
\varepsilon := \frac{1}{ 2^{n+1}[w]_{2} } 
$$
since $\sigma=w^{-1}$ and, in this case,  $[\sigma]_{A_{\infty}}\leq [\sigma]_{A_{2}}=[w]_{A_{2}}$.

A well-known result obtained by Muckenhoupt \cite{m:m} is that the
Hardy-Littlewood maximal function,
$$
Mf(x)= \sup_{Q\ni x} \frac{1}{Q}\int_Q |f(y)| dy,
$$
satisfies  $M:L^p(w)\to L^p(w)$ if and only if  $w$ is in  $A_p$.

\subsection{Sharp maximal operators}\label{sharp}

Let $M^\#$ be the usual sharp maximal function  of Fefferman
and Stein \cite{fs:fs},

$$M^\#f(x)=\sup_{Q\ni x}\inf_c \avgint_Q |f(y)-c|\, dy
\approx \sup_{Q\ni x}\avgint_Q |f(y)-f_Q|\, dy. $$

For a given  $\delta\in(0,\infty)$ we will use the notation 
$$
M_\delta f(x)= \Big(M( |f|^{\delta})(x)\Big)^{1/\delta}.
$$
and 
$$
M_\delta^\# f(x) = \Big(M^{\#}( |f|^{\delta})(x)\Big)^{1/\delta}=
\Big( \sup_{x\in Q}  \avgint_Q | |f|^{\delta}  - |f|^{\delta}_Q| dx\Big)^{1/\delta}
$$

The following form of the classical result of Fefferman
and Stein \cite{fs:fs} (see also \cite{j:j}) will be useful for our purposes.

Let $0<p,\delta<\infty$  and let $w$ be a weight in $A_{\infty}$. Then,
there exists $C>0$ (depending on the $A_{\infty}$ constant of $w$),
such that
\begin{equation}\label{FSppstrong}
\int_{\mathbb R^n} (M_\delta f(x))^p \, w(x)dx\leq C\,\int_{\mathbb R^n}
(M^{\#}_\delta f(x))^p \, w(x)dx,
\end{equation}
for all function $f$ for which the left  hand side is finite.

\subsection{Extrapolation}

We begin this section by recalling a modern version of the Rubio de Francia extrapolation theorem.

\begin{theorem} \label{rfrf} Let $\mathcal F$ be a family of pairs of functions $(f, g)$ such that, for some $1\leq p_0<\infty$, every $w\in A_{p_0}$, every $(f,g)\in\mathcal F$, and an increasing function $\varphi$,
$$
\|g\|_{L^{p_0}(w)} \le \varphi([w]_{A_{p_0}}) \|f\|_{L^{p_0}(w)}.
$$
Then, for every  $1<p<\infty$  there exists and increasing function $\phi_p$ depending only $\varphi$ such that  
for every $w\in A_{p}$ and every $(f,g)\in\mathcal F$, 
$$
\|g\|_{L^{p}(w)} \le \phi_p([w]_{A_{p}}) \|f\|_{L^{p}(w)}.
$$
\end{theorem}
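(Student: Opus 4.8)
The result is the classical Rubio de Francia extrapolation theorem in its quantitative form, and the plan is to follow the standard proof via the Rubio de Francia iteration algorithm (cf.\ \cite{cmp2} and the references therein). Fix $1<p<\infty$ and $w\in A_p$; since the case $p=p_0$ is exactly the hypothesis, assume $p\neq p_0$, and assume $\|f\|_{L^p(w)}<\infty$ (otherwise there is nothing to prove); replacing $f,g$ by $|f|,|g|$ we may take them nonnegative. The strategy is always the same: transfer the desired estimate from exponent $p$ to exponent $p_0$ by duality and/or H\"older's inequality, run the $L^{p_0}$-hypothesis against a weight $W\in A_{p_0}$ manufactured by the Rubio de Francia algorithm, and transfer back, recording at each step how the constants depend on $[w]_{A_p}$.

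\emph{Case $p>p_0$.} Put $\sigma=w^{1-p'}\in A_{p'}$, with $[\sigma]_{A_{p'}}=[w]_{A_p}^{p'-1}$, so $M$ is bounded on $L^{p'}(\sigma)$ with norm controlled by $[w]_{A_p}$ and $n$. By duality for $L^p(w)$, choose $0\le h$ with $\|h\|_{L^{p'}(\sigma)}\le 1$ and $\|g\|_{L^p(w)}=\int_{\mathbb R^n} g\,h\,dx$. Applying the Rubio de Francia algorithm $\mathcal{R}h=\sum_{k\ge0}(2\|M\|_{L^{p'}(\sigma)})^{-k}M^kh$ produces a function with $h\le\mathcal{R}h$, $\|\mathcal{R}h\|_{L^{p'}(\sigma)}\le 2$ and $[\mathcal{R}h]_{A_1}\le 2\|M\|_{L^{p'}(\sigma)}$. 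I would then build $W$ as an explicit product of $\mathcal{R}h$ with a second Rubio de Francia iterate (built from $f$ and $w$ on an appropriate weighted space), with exponents chosen so that, by the easy direction of Jones factorization, $W=W_0W_1^{1-p_0}$ with $W_0,W_1\in A_1$; hence $W\in A_{p_0}$ with $[W]_{A_{p_0}}\le[W_0]_{A_1}[W_1]_{A_1}^{p_0-1}$ bounded by an increasing function of $[w]_{A_p}$. H\"older's inequality gives $\int_{\mathbb R^n}g\,h\,dx\le\|g\|_{L^{p_0}(W)}\|\mathcal{R}h\|_{L^{p_0'}(W^{1-p_0'})}$, where $\|\mathcal{R}h\|_{L^{p_0'}(W^{1-p_0'})}\lesssim\|\mathcal{R}h\|_{L^{p'}(\sigma)}\le 2$ and $\|f\|_{L^{p_0}(W)}\lesssim\|f\|_{L^p(w)}$ (this last estimate uses $p_0<p$, via H\"older with exponent $p/p_0$). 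Inserting the hypothesis $\|g\|_{L^{p_0}(W)}\le\varphi([W]_{A_{p_0}})\|f\|_{L^{p_0}(W)}$ and chaining the inequalities yields $\|g\|_{L^p(w)}\le\phi_p([w]_{A_p})\|f\|_{L^p(w)}$, with $\phi_p$ obtained by composing $\varphi$ with the increasing bounds collected above (this is where monotonicity of $\varphi$ enters).

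\emph{Case $p<p_0$.} Here I would not dualize: write $\int_{\mathbb R^n} g^p w\,dx$, insert $W\in A_{p_0}$ by H\"older with exponent $p_0/p$, apply the hypothesis to $\int g^{p_0}W\,dx$, and estimate $\int f^{p_0}W\,dx$ back against $\int f^p w\,dx$ by a second H\"older. Again the correct $W$ is not a closed-form power of $w$ but is produced from $w$ by one or two passes of the Rubio de Francia algorithm, so that simultaneously $W\in A_{p_0}$ and the auxiliary integrals $\int(wW^{-p/p_0})^{(p_0/p)'}\,dx$ and $\int(Ww^{-p_0/p})^{(p_0/p)'}\,dx$ are finite and controlled by $[w]_{A_p}$; collecting constants gives $\phi_p$ as before. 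In both cases a routine truncation plus monotone convergence disposes of the a priori finiteness of the norms before the hypothesis is invoked, and since $\mathcal F$ consists of pairs and linearity of an operator is never used, the argument applies verbatim in the stated generality.

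The one genuinely substantive point — everything else being duality and H\"older — is the quantitative bookkeeping: showing that $[W]_{A_{p_0}}$, the operator norm of $M$ on the relevant weighted spaces, the $A_1$ constants of the Rubio de Francia iterates, and the auxiliary integrals above are all bounded by an increasing function of $[w]_{A_p}$ and the dimension alone. This is precisely what forces $\phi_p$ to depend only on $\varphi$ (and $p$, $n$), and it is the step I expect to require the most care.
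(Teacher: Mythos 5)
The paper does not actually prove Theorem \ref{rfrf}: it is quoted as the known quantitative form of Rubio de Francia extrapolation, with the dependence of $\phi_p$ on $\varphi$ and $p$ referred to \cite[Theorem 3.1]{d:d} (see also \cite{cmp2}). Your proposal follows exactly the route of those references: duality and H\"older to pass between the exponents $p$ and $p_0$, an auxiliary weight $W\in A_{p_0}$ manufactured by the Rubio de Francia iteration $\mathcal R h=\sum_k (2\|M\|)^{-k}M^k h$, reverse factorization $W=W_0W_1^{1-p_0}$ with $W_0,W_1\in A_1$ to control $[W]_{A_{p_0}}$, and bookkeeping showing every constant is increasing in $[w]_{A_p}$. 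The steps you do write out are correct (the duality pairing with $\sigma=w^{1-p'}\in A_{p'}$, the properties $h\le\mathcal Rh$, $\|\mathcal Rh\|_{L^{p'}(\sigma)}\le 2$, $[\mathcal Rh]_{A_1}\le 2\|M\|_{L^{p'}(\sigma)}$, and the two H\"older transfers), so in method you coincide with the proof the paper relies on. The caveat is that in both cases the weight $W$ is never actually exhibited: you assert that exponents can be chosen so that simultaneously $[W]_{A_{p_0}}$ is controlled, $\|f\|_{L^{p_0}(W)}\lesssim\|f\|_{L^p(w)}$, and the remaining dual factor stays bounded. That simultaneous construction --- one iterate built from $f$ via $M$ on $L^p(w)$, one from the dual function via $h\mapsto M(hw)/w$ on $L^{p'}(w)$ (equivalently $M$ on $L^{p'}(\sigma)$), combined through powers in $[0,1]$ so that reverse factorization applies --- is the real content of the theorem, so as written your argument is a correct outline rather than a self-contained proof; the missing computation is standard and can be imported essentially verbatim from \cite{d:d} or \cite{cmp2}.
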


 The dependence of $\phi_p$ on $\varphi$ and $p$ can be tracked explicitly; see \cite[Theorem 3.1]{d:d}.
In fact, we shall require the following more specific  version which can be found in  \cite{cst:cst}.

\begin{theorem}\label{RdeF-bis}
Let $W>0$ and let $\mathcal F$ be a family of pairs of measurable functions so that, for some $p_0>1$ 
$$
\varphi(W):=\sup_{\|w\|_{A_{p_0}\le W}}\sup_{(f,g)\in \mathcal F} \frac{\|g\|_{L^{p_0}(w)}}{  \|f\|_{L^{p_0}(w)}} <\infty, 
$$ 
then, for every $p>1$ there exist constants $C_1$ and $C_2$  so that if $M>0$ satisfies that $C_2 M^{\max(1, \frac{p_0-1}{p-1})}=W$, then 
$$
\sup_{\|w\|_{A_{p}\le M}}\sup_{(f,g)\in \mathcal F}  \frac{\|g\|_{L^{p}(w)}}{  \|f\|_{L^{p}(w)}} \le 
C_1\varphi(W)<\infty.
$$ 
\end{theorem}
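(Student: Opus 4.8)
The statement is a quantitative, ``localized'' form of Theorem~\ref{rfrf}: instead of a conclusion for all $w\in A_p$, one keeps track of exactly which weights get used. The plan is therefore to run the Rubio de Francia extrapolation machinery while bookkeeping the Muckenhoupt constants it generates. The point to exploit is that, once a weight $w$ with $[w]_{A_p}\le M$ has been fixed, every appeal to the $L^{p_0}$ hypothesis inside that machinery is made at a weight whose $A_{p_0}$-constant is bounded by a fixed power of $[w]_{A_p}$; since $W\mapsto\varphi(W)$ is nondecreasing, each such appeal then costs at most $\varphi(W)$ provided one takes $W=C_2 M^{\max\left(1,\frac{p_0-1}{p-1}\right)}$. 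The explicit shape of the resulting $C_1$ and $C_2$ can be extracted from \cite[Theorem~3.1]{d:d} and from \cite{cst:cst}.

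The engine is the Rubio de Francia iteration operator. If $1<s<\infty$ and $v$ is a weight for which $M$ is bounded on $L^s(v)$, and $\|M\|_{L^s(v)}$ denotes its operator norm there, then for $0\le h\in L^s(v)$ the function
\[
\mathcal R h:=\sum_{k=0}^{\infty}\frac{M^{(k)}h}{2^{k}\,\|M\|_{L^s(v)}^{k}}
\]
(with $M^{(0)}h=h$ and $M^{(k)}$ the $k$-fold composition of $M$) satisfies $h\le\mathcal R h$ pointwise, $\|\mathcal R h\|_{L^s(v)}\le 2\|h\|_{L^s(v)}$, and $\mathcal R h\in A_1$ with $[\mathcal R h]_{A_1}\le 2\,\|M\|_{L^s(v)}$. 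Combining this with the sharp weighted estimate of Buckley, $\|M\|_{L^s(v)}\le c_{n,s}\,[v]_{A_s}^{1/(s-1)}$, yields $[\mathcal R h]_{A_1}\le c_{n,s}\,[v]_{A_s}^{1/(s-1)}$, and it is this estimate that is ultimately responsible for the exponent $\frac{p_0-1}{p-1}$ in the statement.

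The scheme then proceeds as follows: extract, by duality in $L^p(w)$ (or in $L^{p/p_0}(w)$), one or two test functions; replace them by $A_1$-majorants produced as above on a suitably chosen weighted Lebesgue space; use these to manufacture a single weight $W_0\in A_{p_0}$; apply the $L^{p_0}$ hypothesis to the pair $(f,g)$ against $W_0$; and close the estimate with H\"older's inequality together with the norm control of the majorants. When $p\ge p_0$ the dualization is naturally carried out in $L^{p/p_0}(w)$ and the weight produced satisfies $[W_0]_{A_{p_0}}\le C\,[w]_{A_p}$. When $1<p<p_0$ one dualizes instead in $L^p(w)$, which brings in $\sigma=w^{1-p'}$ with $[\sigma]_{A_{p'}}=[w]_{A_p}^{1/(p-1)}$, and running the iteration partly on the $\sigma$-side produces $[W_0]_{A_{p_0}}\le C\,[w]_{A_p}^{(p_0-1)/(p-1)}$. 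In every case the constants depend only on $n$, $p$ and $p_0$, not on $(f,g)$, so the suprema over $\mathcal F$ are preserved throughout and $C_1$ merely absorbs the finitely many accumulated constants.

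I expect the main obstacle to be exactly the verification that the manufactured weight $W_0$ genuinely lies in $A_{p_0}$ with an $A_{p_0}$-constant dominated by the asserted power of $[w]_{A_p}$. This rests on the arithmetic of Muckenhoupt classes---Jones factorization, together with the stability of the $A_q$ condition under multiplication by $A_1$ weights and under the relevant real powers---performed carefully enough that every constant stays of power type in $[w]_{A_p}$. Once this is secured, the rest is the bookkeeping sketched above: set $W=C_2 M^{\max(1,(p_0-1)/(p-1))}$, note that each invocation of the hypothesis contributes at most $\varphi(W)$, and collect constants.
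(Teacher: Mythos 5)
This is essentially the paper's route: the paper gives no in-text proof of Theorem \ref{RdeF-bis}, taking it from \cite{cst:cst}, which in turn rests on the explicitly tracked constants of Rubio de Francia extrapolation in \cite[Theorem 3.1]{d:d}, and your sketch reconstructs precisely that mechanism (Rubio de Francia iteration plus Buckley's bound $\|M\|_{L^s(v)}\lesssim [v]_{A_s}^{1/(s-1)}$, an auxiliary weight with $[W_0]_{A_{p_0}}\le C[w]_{A_p}^{\max(1,(p_0-1)/(p-1))}$, and then the monotonicity of $\varphi$ together with the observation that the argument only invokes the $L^{p_0}$ hypothesis at weights whose $A_{p_0}$-constant is at most $W$). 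The one detail to repair when writing it out is the case $p\ge p_0$: there the $A_1$-majorant must come from the dual algorithm $h\mapsto M(hw)/w$ acting on $L^{p'}(w)$ (iteration on the $\sigma$-side, which gives $[(\mathcal R'h)\,w]_{A_1}\lesssim[w]_{A_p}$ and hence the exponent $1$), since plain $M$ need not be bounded on $L^{(p/p_0)'}(w)$ for $w\in A_p$, whereas your text attaches the $\sigma$-side iteration to the case $p<p_0$, where in fact the plain algorithm on $L^p(w)$ applied to $f$ (no duality needed) already yields the exponent $\frac{p_0-1}{p-1}$.
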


 We will also use an extrapolation theorem based on the $A_{\infty}$ class in contrast to the usual $A_{p_0}$ condition, $p_0\in(1,\infty)$. 
This approach originated in \cite{cmp1, MPT}   and was subsequently expanded in \cite{CGMP}.

\begin{theorem} \label{ExtrapAinfty} Let $\mathcal F$ be a family of pairs of functions $(f, g)$ such that, for some $0< p_0<\infty$, every $w\in A_{\infty}$ and every $(f,g)\in\mathcal F$, 
$$
\|g\|_{L^{p_0}(w)} \leq \varphi([w]_{A_{\infty}}) \|f\|_{L^{p_0}(w)}
$$
 for some  increasing function $\varphi$.  Then for every $0<p<\infty$ there exists 
  an increasing  function $\phi_p$ depending only $\varphi$   such that for every $w\in A_{\infty}$ and every $(f,g)\in\mathcal F$, 
$$
\|g\|_{L^{p}(w)} \le \phi_p([w]_{A_{\infty}}) \|f\|_{L^{p}(w)}. 
$$
\end{theorem}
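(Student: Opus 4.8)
The plan is to follow the by-now standard route for $A_\infty$-extrapolation from \cite{cmp1} (made quantitative in \cite{CGMP}): a Rubio de Francia iteration argument, but run with respect to the measure $d\mu=w\,dx$ instead of Lebesgue measure, so that the auxiliary weights it produces have $A_\infty$ characteristic controlled by $[w]_{A_\infty}$ alone. (This is genuinely stronger than what one would get by restricting the hypothesis to $w\in A_{p_0}$ and invoking the classical extrapolation of Theorem~\ref{rfrf}, which only returns $A_p$-weights.) First I would normalize to $p_0=1$: since $\big\||h|^{p_0}\big\|_{L^{s}(w)}=\|h\|_{L^{p_0 s}(w)}^{p_0}$, the hypothesis for $\mathcal F$ at exponent $p_0$ is precisely the hypothesis at exponent $1$ (with $\varphi$ replaced by $\varphi^{p_0}$, still for all $w\in A_\infty$) for the rescaled family $\mathcal F^{p_0}=\{(|f|^{p_0},|g|^{p_0}):(f,g)\in\mathcal F\}$, and the conclusion for $\mathcal F$ at an exponent $p$ is equivalent to the conclusion for $\mathcal F^{p_0}$ at exponent $p/p_0$; as $p/p_0$ sweeps out $(0,\infty)$, this reduces everything to: assuming the $L^1(w)$ estimate for all $w\in A_\infty$, prove the $L^p(w)$ estimate for all $w\in A_\infty$ and all $0<p<\infty$.

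Next I would treat the range $1\le p<\infty$. Fix $w\in A_\infty$ and $(f,g)$ with $\|f\|_{L^p(w)}<\infty$, and dualize: $\|g\|_{L^p(w)}=\int |g|\,h\,w\,dx$ for some $h\ge 0$ with $\|h\|_{L^{p'}(w)}=1$. Let $M_w$ be the maximal operator attached to $d\mu=w\,dx$; since $w\in A_\infty$ the measure $\mu$ is doubling with doubling constant controlled by $[w]_{A_\infty}$, so $M_w$ is bounded on $L^{p'}(w)$ with norm $N=N(p,[w]_{A_\infty})$. The Rubio de Francia iterate $\mathcal R h=\sum_{k\ge 0}(2N)^{-k}M_w^{\,k}h$ then satisfies $h\le \mathcal R h$, $\|\mathcal R h\|_{L^{p'}(w)}\le 2$, and $M_w(\mathcal R h)\le 2N\,\mathcal R h$, i.e. $\mathcal R h\in A_1(\mu)$ with constant $\le 2N$. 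A standard stability lemma --- again a quantitative manifestation of $w\in A_\infty$, through the precise reverse H\"older inequality~\eqref{eq:RHI} --- gives $v:=(\mathcal R h)\,w\in A_\infty$ with $[v]_{A_\infty}\le \Psi([w]_{A_\infty})$ for some increasing $\Psi$ (depending on $p$). Applying the hypothesis with $v$ and then H\"older,
\[
\|g\|_{L^p(w)}=\int |g|\,h\,w\ \le\ \int |g|\,v\ \le\ \varphi\big([v]_{A_\infty}\big)\int |f|\,v=\varphi\big([v]_{A_\infty}\big)\int |f|\,(\mathcal R h)\,w\ \le\ 2\,\varphi\big(\Psi([w]_{A_\infty})\big)\,\|f\|_{L^p(w)},
\]
which is the conclusion with $\phi_p=2\,\varphi\circ\Psi$ (the chain also shows all integrals are finite).

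The hard part, which I expect to be the main obstacle, is the range $0<p<1$, where $L^p(w)$ has only the ``reverse H\"older'' duality
\[
\|g\|_{L^p(w)}=\inf\Big\{\int |g|\,k\,w\,dx:\ k>0,\ \|k\|_{L^{p'}(w)}=1\Big\},\qquad p'=\tfrac{p}{p-1}<0.
\]
Here I would, after the usual truncations (replacing $|g|$ by $\min(|g|,R)\chi_{B(0,R)}$ and letting $R\to\infty$, and discarding the trivial cases $\|f\|_{L^p(w)}\in\{0,\infty\}$), try to exhibit a single admissible weight $k$ with $kw\in A_\infty$, $[kw]_{A_\infty}$ controlled by $[w]_{A_\infty}$, and $\int |f|\,k\,w\lesssim \|f\|_{L^p(w)}$ --- the hypothesis applied with $v=kw$ then closes the estimate as in the previous step. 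The natural starting point is the optimal $k_0=c\,|f|^{p-1}$ (with $c$ the normalizing constant, so that $k_0$ is admissible and $\int |f|\,k_0\,w=\|f\|_{L^p(w)}$), and one then has to push $k_0$ through an $M_w$-Rubio de Francia iteration, rescaled by an auxiliary exponent so that both the $L^{p'}(w)$-normalization and the size of $\int |f|\,k\,w$ survive while $kw$ turns into a bona fide $A_\infty$ weight; this delicate construction is the technical heart, and I would follow \cite{cmp1, CGMP} for it. The recurring difficulty in all of this is purely quantitative: every auxiliary weight that shows up must have $A_\infty$ characteristic bounded by a function of $[w]_{A_\infty}$ \emph{alone}, which is precisely why one cannot use the ordinary Hardy--Littlewood maximal operator (whose $L^s(w)$-norm is governed by $[w]_{A_s}$, with neither $s$ nor $[w]_{A_s}$ controlled by $[w]_{A_\infty}$) and must work with $M_w$, leaning on the doubling of $w\,dx$, on \eqref{eq:RHI}, and on the stability of $A_\infty$ under multiplication by $A_1(w\,dx)$-weights.
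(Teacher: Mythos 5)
The paper does not actually prove Theorem~\ref{ExtrapAinfty}; it quotes it from the literature (\cite{cmp1, MPT, CGMP}), and your overall plan is indeed the plan of those references: rescale to $p_0=1$, then run a Rubio de Francia iteration adapted to the measure $w\,dx$ so that all auxiliary constants are governed by $[w]_{A_\infty}$ alone. The rescaling step and the duality setup for $1\le p<\infty$ are fine. But as a proof your proposal has two genuine gaps. First, and most importantly, the range $0<p<1$ (equivalently $p<p_0$) is not proved at all: you describe a candidate weight $k_0=c|f|^{p-1}$ and say you would ``push it through an $M_w$-iteration, rescaled by an auxiliary exponent,'' and then defer the construction to \cite{cmp1, CGMP}. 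That construction is exactly the nontrivial half of the theorem (it is where finiteness/truncation issues and the choice of the auxiliary exponent interact with the requirement that the new weight be in $A_\infty$ with controlled constant), so deferring it means half of the statement is unestablished.

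Second, even in the range $1\le p<\infty$ the step you call a ``standard stability lemma'' --- that $v=(\mathcal R h)\,w\in A_\infty$ with $[v]_{A_\infty}\le\Psi([w]_{A_\infty})$, where $\mathcal Rh\in A_1(w\,dx)$ with constant $2N$ --- is asserted, not proved, and it is precisely where the quantitative content lies. The usual route is: $A_1(w\,dx)$ plus a reverse H\"older inequality for $\mathcal Rh$ with respect to $w\,dx$ gives, for $E\subset Q$, $\frac{v(E)}{v(Q)}\lesssim\big(\frac{w(E)}{w(Q)}\big)^{1/s'}\lesssim\big(\frac{|E|}{|Q|}\big)^{\delta}$ using \eqref{eq:RHI}, and then one converts this back into a Fujii--Wilson bound. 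Each link needs justification with constants depending only on $[w]_{A_\infty}$ (and $n$, $p$): the reverse H\"older inequality for $A_1(w\,dx)$ weights requires control of the doubling constant of $w\,dx$, which is not an immediate consequence of \eqref{eq:RHI} and must itself be traced back to $[w]_{A_\infty}$; likewise $N$, the $L^{p'}(w)$ norm of $M_w$, should be taken for the centered operator (Besicovitch) if you want a dimensional constant rather than one depending on doubling. None of this is carried out, and your own closing remark correctly identifies this uniform quantification as ``the recurring difficulty'' --- but identifying it is not the same as resolving it. So the proposal is a correct roadmap to the cited proof rather than a proof: to complete it you must supply the $0<p<1$ construction and a quantitative proof of the stability lemma (or cite and verify the precise quantitative statements from \cite{cmp1, CGMP}).
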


A theory of extrapolation in weighted mixed Lebesgue spaces has been   previously  developed by Kurtz.
In fact, the  following theorem was proved in \cite{k:k} but we include a simpler proof as a direct application of Theorem \ref{rfrf}.

\begin{theorem} \label{ext} Let $\mathcal F$ be a family of pair of functions $(f, g)$ defined on $\mathbb R^{n+m}$ such that, for some $p_0\ge 1$, every $w\in A_{p_0}(\mathbb R^{n+m})$ and every $(f,g)\in\mathcal F$, 
\begin{equation}\label{rf2}
\|g\|_{L^{p_0}_w(\mathbb R^{n+m})} \le \varphi([w]_{A_{p_0}}) \|f\|_{L^{p_0}_w(\mathbb R^{n+m})},
\end{equation}
for some increasing function $\varphi$. 
Then, for every $1<p,q<\infty$, there exist a function $\psi_{p,q}$ such that for every  $u\in A_p(\mathbb R^n)$ and  every $v\in A_q(\mathbb R^m)$
$$
\|g\|_{L^p_uL^q_v} \le \psi_{p,q}([u]_{A_p}, [v]_{A_q}) \|f\|_{L^p_uL^q_v}, $$
for all $ (f,g)\in\mathcal F$.
\end{theorem}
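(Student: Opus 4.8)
The plan is to invoke the scalar Rubio de Francia extrapolation theorem (Theorem~\ref{rfrf}) twice: once on $\mathbb R^{n+m}$ to normalize the exponent to $q$, and once on $\mathbb R^n$ to change the outer exponent from $q$ to $p$. First, applying Theorem~\ref{rfrf} to the family $\mathcal F$ on $\mathbb R^{n+m}$, the hypothesis \eqref{rf2} at the single exponent $p_0$ self‑improves: for every $w\in A_q(\mathbb R^{n+m})$ and every $(f,g)\in\mathcal F$ one has $\|g\|_{L^{q}_w(\mathbb R^{n+m})}\le \phi_q([w]_{A_q})\,\|f\|_{L^{q}_w(\mathbb R^{n+m})}$ for some increasing $\phi_q$ depending only on $\varphi$, $p_0$ and $q$. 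This step only uses $q>1$, which is part of the hypothesis.

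Next, I would use the elementary fact that tensor‑product weights are admissible: if $a\in A_q(\mathbb R^n)$ and $v\in A_q(\mathbb R^m)$, then $w(x,y):=a(x)v(y)$ lies in $A_q(\mathbb R^{n+m})$ with $[w]_{A_q}\le [a]_{A_q}[v]_{A_q}$, since every cube of $\mathbb R^{n+m}$ is a product $Q_1\times Q_2$ of cubes of equal side length and the $A_q$ ratio then factors. Inserting $w=a\otimes v$ into the exponent‑$q$ estimate and using Fubini together with the identity $\|g\|_{L^{q}_a L^{q}_v}^{q}=\int_{\mathbb R^{n+m}}|g(x,y)|^{q}a(x)v(y)\,dx\,dy$, one obtains the \emph{diagonal} mixed estimate: for all $a\in A_q(\mathbb R^n)$ and $v\in A_q(\mathbb R^m)$,
\[
\|g\|_{L^{q}_a L^{q}_v}\le \phi_q\big([a]_{A_q}[v]_{A_q}\big)\,\|f\|_{L^{q}_a L^{q}_v}.
\]

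Finally, fix $v\in A_q(\mathbb R^m)$ and $1<p<\infty$, and for $(f,g)\in\mathcal F$ set $F_v(x):=\|f(x,\cdot)\|_{L^{q}_v(\mathbb R^m)}$ and $G_v(x):=\|g(x,\cdot)\|_{L^{q}_v(\mathbb R^m)}$, so that $\|g\|_{L^{p}_u L^{q}_v}=\|G_v\|_{L^{p}_u(\mathbb R^n)}$ and likewise for $f$. The diagonal estimate says exactly that $\|G_v\|_{L^{q}_a(\mathbb R^n)}\le \varphi_v([a]_{A_q})\,\|F_v\|_{L^{q}_a(\mathbb R^n)}$ for every $a\in A_q(\mathbb R^n)$, where $\varphi_v(t):=\phi_q\big(t\,[v]_{A_q}\big)$ is increasing. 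Applying Theorem~\ref{rfrf} on $\mathbb R^n$ to the family $\{(F_v,G_v):(f,g)\in\mathcal F\}$ with initial exponent $q$ and control function $\varphi_v$ produces an increasing function $\phi'_{p}$, depending only on $\varphi_v$ (hence on $p$, $q$ and $[v]_{A_q}$), with $\|G_v\|_{L^{p}_u(\mathbb R^n)}\le \phi'_{p}([u]_{A_p})\,\|F_v\|_{L^{p}_u(\mathbb R^n)}$ for all $u\in A_p(\mathbb R^n)$. Translating back yields the claim with $\psi_{p,q}([u]_{A_p},[v]_{A_q}):=\phi'_{p}([u]_{A_p})$.

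The point to watch — the only real subtlety — is the constant bookkeeping that makes both invocations of Theorem~\ref{rfrf} legitimate: in the diagonal estimate the right‑hand side must be controlled by a \emph{single} increasing function of $[a]_{A_q}$ once $v$ is frozen, which is precisely what the factorization $[a\otimes v]_{A_q}\le[a]_{A_q}[v]_{A_q}$ guarantees, and the exponent‑normalization step must be available, which it is because $q>1$. Note also that no splitting into the cases $p\le q$ and $p>q$ is needed. Apart from this bookkeeping, the argument reduces to tensorization of $A_q$ weights and Fubini.
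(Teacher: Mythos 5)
Your proposal is correct and follows essentially the same route as the paper: a first application of Theorem~\ref{rfrf} on $\mathbb R^{n+m}$ to move from $p_0$ to the exponent $q$, restriction to tensor weights $u\otimes v$ via $[u\otimes v]_{A_q}\le[u]_{A_q}[v]_{A_q}$, and a second application of Theorem~\ref{rfrf} on $\mathbb R^n$ to the slice functions $F_v,G_v$ with $v$ frozen, changing $q$ into $p$. The constant bookkeeping you highlight (a single increasing function of $[u]_{A_q}$ once $v$ is fixed) is exactly the point the paper makes with its function $\Phi_{q,[v]_{A_q}}$.
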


\begin{proof}    Fix   { $q>1$}. By Theorem \ref{rfrf}, we have that \eqref{rf2} implies that, for every $w\in A_q(\mathbb R^{n+m})$, 
{
$$
\|g\|_{L^{q}(w)} \le \phi_q([w]_{A_{q}}) \|f\|_{L^{q}(w)}.
$$
Let now  fix $v\in A_q(\mathbb R^m)$ and let us consider the function
$$
G(x)= \left(\int_{\mathbb R^m} |g(x,y)|^q v(y) dy\right)^{1/q}, 
$$
 and similarly define $F$ by replacing  $g$ with $f$. Then, for every   $u\in A_q(\mathbb R^n)$,  we clearly have that $u\otimes v\in A_q(\mathbb R^{n+m})$ with $[u\otimes v]_{A_{q}}\le [u]_{A_{q}}[ v]_{A_{q}}$, and hence 
\begin{align*}
\|G\|_{L^q_u(\mathbb R^m)} = \|g\|_{L^q_{u\otimes v}(\mathbb R^{n+m})} 
& \le \phi_q([u\otimes v]_{A_q}) \|f\|_{L^q_{u\otimes v} (\mathbb R^{n+m})}\\
& \le  \phi_q([u]_{A_{q}}[ v]_{A_{q}}) \|F\|_{L^q_u(\mathbb R^m)}\\
& \le  \Phi_{q,[v]_{A_{q}}}([u]_{A_{q}})\|F\|_{L^q_u(\mathbb R^m)},
\end{align*}
    where $\Phi_{q,[v]_{A_{q}}}([u]_{A_{q}})= \phi_q([u]_{A_{q}}[ v]_{A_{q}})$ is an increasing function of $[u]_{A_{q}}$.
Using Theorem \ref{rfrf}, we conclude that for every $1<p<\infty$ there exists an increasing function $\Psi_{p}$ depending on  $\Phi_{q,[v]_{A_{q}}}$ such that for every $u\in A_p(\mathbb R^n)$, 
$$
\|g\|_{L^p_u L^q_v(\mathbb R^{n+m})} =\|G\|_{L^p_u} \le \Psi_{p}([u]_{A_{p}}) \|F\|_{L^p_u}=\|f\|_{L^p_uL^q_v(\mathbb R^{n+m})}.
$$
}
{ Observe that $\Psi_{p}([u]_{A_{p}})$ is really a function  $\psi_{p,q}([u]_{A_p}, [v]_{A_q}) $} and the result follows. 
\end{proof}

{ We note that using the precise estimates in \cite[Theorem 3.1]{d:d} one can check that $\psi_{p,q}$ is actually increasing in each variable.}

The following result (also proved in \cite{k:k}) follow as an easy corollary of Theorem \ref{ext}.

\begin{corollary}  \label{max}   Let $M$ be the Hardy-Littlewood maximal operator in $\mathbb R^{n+m}$ then, for every  $1<p,q<\infty$, 
  $u\in A_p(\mathbb R^n)$ and  $v\in A_q(\mathbb R^m)$, 
$$
M:L^p_uL^q_v \longrightarrow L^p_uL^q_v,
$$
with norm controlled by $\varphi_{p,q}([u]_{A_p}, [v]_{A_q})$, with $\varphi_{p,q}$ an increasing function. 
\end{corollary}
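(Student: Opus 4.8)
The plan is to deduce Corollary~\ref{max} directly from Theorem~\ref{ext} by applying the latter to an appropriate family $\mathcal F$ built from the Hardy--Littlewood maximal operator on $\mathbb R^{n+m}$. Concretely, I would fix a parameter $p_0$ with $1<p_0<\infty$ and let
$$
\mathcal F=\{(f,Mf): f\in L^\infty_c(\mathbb R^{n+m})\},
$$
the set of pairs consisting of a bounded compactly supported function $f$ on $\mathbb R^{n+m}$ and its Hardy--Littlewood maximal function $Mf$. By Muckenhoupt's theorem (quoted in the Preliminaries), for every $w\in A_{p_0}(\mathbb R^{n+m})$ one has $\|Mf\|_{L^{p_0}(w)}\le \varphi([w]_{A_{p_0}})\|f\|_{L^{p_0}(w)}$ for a suitable increasing $\varphi$; the quantitative (increasing in $[w]_{A_{p_0}}$) form of this bound is classical (e.g. Buckley's estimate), so hypothesis~\eqref{rf2} of Theorem~\ref{ext} holds. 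Restricting to $f\in L^\infty_c$ guarantees that $\|f\|_{L^{p_0}(w)}<\infty$, so the inequality is meaningful, and this class is dense enough to recover the full operator bound by the usual limiting argument.

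The main steps are then: (1) verify the hypothesis of Theorem~\ref{ext} as above; (2) invoke Theorem~\ref{ext} to conclude that for every $1<p,q<\infty$, every $u\in A_p(\mathbb R^n)$, and every $v\in A_q(\mathbb R^m)$,
$$
\|Mf\|_{L^p_uL^q_v}\le \psi_{p,q}([u]_{A_p},[v]_{A_q})\,\|f\|_{L^p_uL^q_v}
$$
for all $f\in L^\infty_c(\mathbb R^{n+m})$, with $\psi_{p,q}$ increasing in each variable by the remark following Theorem~\ref{ext}; and (3) remove the restriction $f\in L^\infty_c$ by a density/monotone approximation argument. For step~(3), given an arbitrary $f\in L^p_uL^q_v$, I would take $f_k=\min(|f|,k)\chi_{B_k}$, note $Mf_k\uparrow M|f|=Mf$ pointwise by monotone convergence inside the supremum defining $M$, and apply Fatou's lemma (in the iterated integral defining the mixed norm) to pass the uniform bound from the $f_k$ to $f$. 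Setting $\varphi_{p,q}:=\psi_{p,q}$ then yields exactly the claimed conclusion.

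I do not anticipate a serious obstacle here; the only mild subtlety is checking that the mixed-norm space behaves well under the Fatou-type limiting argument in step~(3), i.e.\ that $\|\,\cdot\,\|_{L^p_uL^q_v}$ enjoys the Fatou property, which follows at once by applying the scalar Fatou lemma twice to the nested integrals. A second point worth a line is making sure the constant in Muckenhoupt's inequality can genuinely be taken as an increasing function of $[w]_{A_{p_0}}$ — this is standard, but it is the precise form required to feed Theorem~\ref{ext}. Once these two routine observations are in place, the corollary is immediate.
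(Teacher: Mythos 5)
Your argument is correct and is exactly the route the paper intends: the corollary is stated as an immediate consequence of Theorem~\ref{ext} applied to the family of pairs $(f,Mf)$, with the quantitative (increasing in $[w]_{A_{p_0}}$) form of Muckenhoupt's bound supplying hypothesis~\eqref{rf2}. Your extra care with the truncation to $L^\infty_c$ and the Fatou-property limiting step is a reasonable way to make the "easy corollary" fully rigorous, and introduces nothing essentially different from the paper's approach.
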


 Another corollary of Theorem \ref{ext} is an extension of the Fefferman-Stein inequality \eqref{FSppstrong}
within the context of weighted mixed   Lebesgue spaces.

\begin{corollary}   Let $M$ and $M^{\#}$ be the Hardy-Littlewood maximal operator and Fefferman and Stein sharp maximal function in $\mathbb R^{n+m}$.  Let $0<p,q<\infty$, $\delta>0$ and let   $u\in A_{\infty}(\mathbb R^n)$ and  $v\in A_{\infty}(\mathbb R^m)$. Then there exists 
a function $\psi$ increasing in each variable such that 
$$
\|M_\delta f\|_{L^p_uL^q_v} \le \psi_{p,q}([u]_{A_{\infty}}, [ v]_{A_{\infty}})  \|M_\delta^\#f\|_{L^p_uL^q_v}.
$$
\end{corollary}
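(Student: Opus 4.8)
The plan is to reproduce the argument of Theorem~\ref{ext} with the $A_\infty$-extrapolation of Theorem~\ref{ExtrapAinfty} replacing Theorem~\ref{rfrf}, taking as seed estimate the Fefferman--Stein inequality \eqref{FSppstrong} on $\mathbb R^{n+m}$. First I would fix an auxiliary exponent $p_0\in(0,\infty)$ and let $\mathcal F$ be the family of pairs $(M^\#_\delta f,\,M_\delta f)$, with $f$ ranging over functions for which the relevant norms are finite (a standard \emph{a priori} restriction, removed at the end by truncation). By \eqref{FSppstrong}, for every $w\in A_\infty(\mathbb R^{n+m})$ one has $\|M_\delta f\|_{L^{p_0}(w)}\le\varphi([w]_{A_\infty})\,\|M^\#_\delta f\|_{L^{p_0}(w)}$, so Theorem~\ref{ExtrapAinfty} upgrades this to: for every $0<r<\infty$ there is an increasing $\phi_r$ with
$$
\|M_\delta f\|_{L^{r}(w)}\le\phi_r([w]_{A_\infty})\,\|M^\#_\delta f\|_{L^{r}(w)}\qquad\text{for every }w\in A_\infty(\mathbb R^{n+m}).
$$

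Next I would carry out the mixed-norm iteration. Fix $q\in(0,\infty)$ and $v\in A_\infty(\mathbb R^m)$, and set $G(x)=\big(\int_{\mathbb R^m}(M_\delta f(x,y))^{q}\,v(y)\,dy\big)^{1/q}$, with $F$ defined analogously from $M^\#_\delta f$. Since $\|G\|_{L^{q}_u(\mathbb R^n)}=\|M_\delta f\|_{L^{q}(u\otimes v)}$ and likewise for $F$, the displayed inequality with $r=q$ and $w=u\otimes v$ gives $\|G\|_{L^{q}_u}\le\phi_q([u\otimes v]_{A_\infty})\,\|F\|_{L^{q}_u}$ for every $u\in A_\infty(\mathbb R^n)$. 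If we know that $[u\otimes v]_{A_\infty(\mathbb R^{n+m})}\le\Lambda([u]_{A_\infty},[v]_{A_\infty})$ for some $\Lambda$ increasing in each variable, then the pair $(F,G)$ satisfies an $L^{q}$ bound for all $u\in A_\infty(\mathbb R^n)$ with constant increasing in $[u]_{A_\infty}$ (and in $[v]_{A_\infty}$), and a second application of Theorem~\ref{ExtrapAinfty} on $\mathbb R^n$, with seed exponent $q$, yields for every $0<p<\infty$ an increasing $\psi_{p,q}(\cdot,[v]_{A_\infty})$ such that $\|M_\delta f\|_{L^{p}_uL^{q}_v}=\|G\|_{L^{p}_u}\le\psi_{p,q}([u]_{A_\infty},[v]_{A_\infty})\,\|F\|_{L^{p}_u}=\psi_{p,q}([u]_{A_\infty},[v]_{A_\infty})\,\|M^\#_\delta f\|_{L^{p}_uL^{q}_v}$. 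That $\psi_{p,q}$ is increasing in each argument then follows, as in the remark after Theorem~\ref{ext}, by tracking the explicit form of the constants.

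The one step that requires real work, and which I expect to be the main obstacle, is the tensor-product estimate $[u\otimes v]_{A_\infty(\mathbb R^{n+m})}\le\Lambda([u]_{A_\infty(\mathbb R^n)},[v]_{A_\infty(\mathbb R^m)})$. I would obtain it from the precise reverse H\"older inequality \eqref{romega}--\eqref{eq:RHI}. A cube $Q\subset\mathbb R^{n+m}$ of side length $\ell$ factors as $Q=Q_1\times Q_2$ with $Q_1\subset\mathbb R^n$, $Q_2\subset\mathbb R^m$ cubes of side $\ell$, and $(u\otimes v)(Q)=u(Q_1)\,v(Q_2)$. Taking $s=\min(r_u,r_v)>1$, an exponent depending only on $[u]_{A_\infty}$ and $[v]_{A_\infty}$, Fubini together with \eqref{eq:RHI} applied to $u$ and to $v$ gives $\big(\avgint_Q(u\otimes v)^{s}\big)^{1/s}=\big(\avgint_{Q_1}u^{s}\big)^{1/s}\big(\avgint_{Q_2}v^{s}\big)^{1/s}\le 4\,\avgint_{Q_1}u\,\avgint_{Q_2}v=4\,\avgint_Q(u\otimes v)$; that is, $u\otimes v$ obeys a reverse H\"older inequality with exponent $s$ and constant $4$. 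By H\"older's inequality this yields the power-decay bound $(u\otimes v)(E)\le 4\,(|E|/|Q|)^{1/s'}\,(u\otimes v)(Q)$ for every measurable $E\subset Q$, which is a standard reformulation of the $A_\infty$ condition; from it $[u\otimes v]_{A_\infty}$ is bounded by an increasing function of $s'$, hence of $[u]_{A_\infty}$ and $[v]_{A_\infty}$. (Alternatively one could simply cite the known stability of the $A_\infty$ class under tensor products.)
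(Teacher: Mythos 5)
Your proof is correct and follows essentially the same route as the paper: seed the pairs $(M^\#_\delta f,\,M_\delta f)$ with the Fefferman--Stein inequality \eqref{FSppstrong} on $\mathbb R^{n+m}$, freeze $v$ and pass to the functions $F,G$ of the outer variable, control the $A_\infty$ constant of $u\otimes v$, and apply Theorem~\ref{ExtrapAinfty} in the outer variable. Two remarks. First, your initial extrapolation step (from $p_0$ to $r=q$) is redundant: \eqref{FSppstrong} is already stated for every exponent $0<p<\infty$, so the paper uses it directly at exponent $q$ with constant $c\,[w]_{A_\infty}$. Second, the only genuine divergence is the tensor-product estimate. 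The paper bounds $[u\otimes v]_{A_\infty}$ straight from the Fujii--Wilson definition, using the pointwise bound $M\big(\chi_{Q_n\times Q_m}\,u\otimes v\big)(x,y)\le M(\chi_{Q_n}u)(x)\,M(\chi_{Q_m}v)(y)$, which gives the clean multiplicative bound $[u\otimes v]_{A_\infty}\le[u]_{A_\infty}[v]_{A_\infty}$. Your route through the precise reverse H\"older inequality \eqref{romega}--\eqref{eq:RHI} also works, but its last step --- passing from the power-decay bound $(u\otimes v)(E)\le 4(|E|/|Q|)^{1/s'}(u\otimes v)(Q)$ back to a bound on the Fujii--Wilson constant --- is itself a quantitative equivalence that needs a line of justification. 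It is cleaner to skip the power-decay reformulation and argue directly from your reverse H\"older estimate: for any cube $Q\subset\mathbb R^{n+m}$, H\"older and the $L^s$-boundedness of $M$ give $\int_Q M\big((u\otimes v)\chi_Q\big)\,dx\le|Q|^{1/s'}\,\|M\|_{L^s\to L^s}\big(\int_Q(u\otimes v)^s\,dx\big)^{1/s}\le 4\,\|M\|_{L^s\to L^s}\,(u\otimes v)(Q)$, and $\|M\|_{L^s\to L^s}\lesssim s'$ is an increasing function of $[u]_{A_\infty}$ and $[v]_{A_\infty}$. With either version of the tensor-product bound, your second extrapolation step and the monotonicity of $\psi_{p,q}$ go through exactly as in the paper; the paper's version simply yields the sharper constant.
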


\begin{proof}  

By the classical result of Fefferman-Stein \eqref{FSppstrong} for any $0<p,\delta<\infty$, any $w\in A_{\infty}(\mathbb R^{n+m})$  
$$
\|M_\delta f\|_{L^{q}(w)} \leq c_{q,n}\, [w]_{A_{\infty}} \|M_\delta^\# f\|_{L^{q}(w)}.
$$
Now, let $v\in A_{\infty}(\mathbb R^m)$. As before, define the function
$$
G(x)= \left(\int_{\mathbb R^m} |M_{\delta}f(x,y)|^q v(y) dy\right)^{1/q}, 
$$
and similarly define $F$ by 
$$
  F(x)= \left(\int_{\mathbb R^m} |M^{\#}_{\delta}f(x,y)|^q v(y) dy\right)^{1/q}.
$$
Observing that for every   $u\in A_{\infty}(\mathbb R^n)$ and    $v\in A_{\infty}(\mathbb R^m)$ we clearly have that $u\otimes v\in A_{\infty}(\mathbb R^{n+m})$.  Moreover, if we denote by $Q_n$ cubes in $\mathbb R^n$ and similarly $Q_m$, then 
 
\begin{eqnarray*}
[u\otimes v]_{A_{\infty}}\! &\le& \! \sup_{Q_n\times Q_m} \frac 1{u(Q_n) v(Q_m)} \int_{Q_n\times Q_m} M\Big(\chi_{Q_n\times Q_m} u\otimes v\Big)(x,y)dx 
\\
&\le&\!
\sup_{Q_n\times Q_m} \frac 1{u(Q_n) v(Q_m)} \int_{Q_n\times Q_m} M (\chi_{Q_n}u)(x) M (\chi_{  Q_m}v  )(y)dx 
\\
&\le &[u]_{A_{\infty}}[ v]_{A_{\infty}},
\end{eqnarray*}
and hence, we have that 
\begin{align*}
\|G\|_{L^q_u(\mathbb R^m)} = \|M_{\delta}f\|_{L^q_{u\otimes v}(\mathbb R^{n+m})} 
& \leq c_{n,m,q}\, [u\otimes v]_{A_{\infty}} \|M^{\#}_{\delta}f\|_{L^q_{u\otimes v} (\mathbb R^{n+m})}\\
& \leq c_{n,m,q}\, [u]_{A_{\infty}}[ v]_{A_{\infty}} \|F\|_{L^q_u(\mathbb R^m)}. 
\end{align*}
Using Theorem \ref{ExtrapAinfty}, we conclude that, for every $v\in A_{\infty}(\mathbb R^m)$ and every $u\in A_{\infty}(\mathbb R^n)$, 
\begin{eqnarray*}
\|M_{\delta}f\|_{L^p_u L^q_v(\mathbb R^{n+m})} &=&
\|G\|_{L^p_u} \le \psi_{p,q}([u]_{A_{\infty}}, [ v]_{A_{\infty}}) \|F\|_{L^p_u}
\\
&=&\|M^{\#}_{\delta}f\|_{L^p_uL^q_v(\mathbb R^{n+m})},
\end{eqnarray*}
concluding the proof of the corollary. 
\end{proof}

\section {Compactness in weighted mixed Lebesgue spaces}

  We state in this section a version of Fr\'echet-Kolmogorov compactness theorem for weighted mixed Lebesgue spaces setting.  The following result, was proved by Clop and Cruz \cite{cc:cc}.

\begin{theorem} \label{ccc}
Let  $1<p< \infty$ and $u\in A_p$  Then $\mathcal F\subset L^p_u$ is relatively compact in $L^p_u$ if  the following conditions hold. 
\begin{enumerate} 

\item There exists $K>0$ such that, for every $f\in \mathcal F$, $\|f\|_{ L^p_u} \le K$.

\item For every $\varepsilon>0$,  there exists $R>0$ such that
$$
\|\chi_{B(0, R)^c}f \|_{L^p_u} <\varepsilon,  $$
for all $f\in \mathcal F$.

\item For every $\varepsilon>0$, there exists $\delta>0$ so that, for every $h$,  $|h|\le \delta$, 
$$
\|f- f(\cdot+h)\|_{ L^p_u}\le \varepsilon, 
$$
for all $f\in \mathcal F$.
\end{enumerate}
\end{theorem}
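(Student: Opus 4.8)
Proof proposal. The plan is to show that $\mathcal F$ is totally bounded in $L^p_u$; since $L^p_u$ is complete, this is equivalent to relative compactness. The mechanism is the classical Fr\'echet--Kolmogorov strategy adapted to the weight: one approximates the members of $\mathcal F$ by smoothed, compactly supported functions and applies Arzel\`a--Ascoli. Concretely, for a radius $r>0$ I would use the averaging operator
\[
A_rf(x)=\avgint_{B(x,r)}f(z)\,dz=\frac1{|B(0,r)|}\int_{|h|<r}f(x+h)\,dh .
\]
Since $u\in A_p$ forces both $u$ and the dual weight $\sigma:=u^{1-p'}$ to be locally integrable, H\"older's inequality with the pair $(u,\sigma)$ gives, for any bounded $E$, $\int_E|f|\le\|f\|_{L^p_u}\,\sigma(E)^{1/p'}$, so $L^p_u\hookrightarrow L^1_{\mathrm{loc}}(\mathbb R^n)$ and $A_rf$ is a well-defined continuous function. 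Moreover $|A_rf|\le c_n Mf$ pointwise, so $A_rf\in L^p_u$ by Muckenhoupt's theorem.

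I would then establish two facts. \emph{First}, $A_r\to\mathrm{Id}$ uniformly on $\mathcal F$: by Minkowski's integral inequality,
\[
\|A_rf-f\|_{L^p_u}\le\frac1{|B(0,r)|}\int_{|h|<r}\|f(\cdot+h)-f\|_{L^p_u}\,dh,
\]
and by hypothesis (3) the right-hand side is $\le\varepsilon$ once $r\le\delta=\delta(\varepsilon)$, uniformly in $f\in\mathcal F$. \emph{Second}, for each fixed $r>0$ and $R>0$, the family $\{A_rf|_{\overline{B(0,R)}}:f\in\mathcal F\}$ is relatively compact in $C(\overline{B(0,R)})$. Indeed, for $x\in B(0,R)$, H\"older's inequality and (1) give $|A_rf(x)|\le|B(0,r)|^{-1}K\,\sigma(B(0,R+r))^{1/p'}$, a bound independent of $f$; and for $x,x'\in B(0,R)$,
\[
|A_rf(x)-A_rf(x')|\le\frac{K}{|B(0,r)|}\,\sigma\bigl(B(x,r)\triangle B(x',r)\bigr)^{1/p'},
\]
where $|B(x,r)\triangle B(x',r)|\to0$ as $|x-x'|\to0$ at a rate depending only on $r$, while $\sigma$ is absolutely continuous as a measure on $B(0,R+r+1)$; hence the family is equicontinuous, uniformly in $f\in\mathcal F$. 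Arzel\`a--Ascoli applies, and since $u\in L^1(B(0,R))$ the inclusion $C(\overline{B(0,R)})\hookrightarrow L^p_u(B(0,R))$ is bounded, so $\{\chi_{B(0,R)}A_rf:f\in\mathcal F\}$ (the restrictions extended by zero) is relatively compact in $L^p_u$.

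Finally I would assemble the pieces. Given $\varepsilon>0$: use (2) to pick $R$ with $\|\chi_{B(0,R)^c}f\|_{L^p_u}<\varepsilon$ for all $f\in\mathcal F$; use the first fact to pick $r\le\delta$ with $\|A_rf-f\|_{L^p_u}<\varepsilon$ for all $f\in\mathcal F$; then $\|\chi_{B(0,R)^c}A_rf\|_{L^p_u}\le\|A_rf-f\|_{L^p_u}+\|\chi_{B(0,R)^c}f\|_{L^p_u}<2\varepsilon$. A finite $\varepsilon$-net $\{g_1,\dots,g_N\}$ for $\{\chi_{B(0,R)}A_rf:f\in\mathcal F\}$ in $L^p_u$ then yields, via $\|f-g_i\|_{L^p_u}\le\|f-A_rf\|_{L^p_u}+\|\chi_{B(0,R)^c}A_rf\|_{L^p_u}+\|\chi_{B(0,R)}A_rf-g_i\|_{L^p_u}$, a finite $(4\varepsilon)$-net for $\mathcal F$, so $\mathcal F$ is totally bounded. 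The step requiring the most care is the Arzel\`a--Ascoli input: one cannot bound $A_rf$ in $L^\infty$ from the $A_p$ structure alone, since $A_p$ weights may degenerate, so the uniform bound and the equicontinuity must be extracted through H\"older's inequality against the locally integrable dual weight $\sigma=u^{1-p'}$. Relatedly, one must resist writing $\|A_rf\|_{L^p_u}\lesssim\|f\|_{L^p_u}$ through translations, because $f\mapsto f(\cdot+h)$ need not be bounded on $L^p_u$; it is essential that hypothesis (3) controls the \emph{differences} $f(\cdot+h)-f$, to which Minkowski's inequality is applied.
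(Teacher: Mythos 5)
Your argument is correct, but note that the paper does not actually prove Theorem \ref{ccc}: it is quoted from Clop and Cruz \cite{cc:cc}, and the mixed-norm version (Theorem \ref{compact-mixed}) is likewise attributed to the general quasi-Banach function space result of Guo and Zhao \cite{GZ:GZ}. So there is no in-paper proof to compare with; what you have supplied is a self-contained proof of the cited result, following the classical Fr\'echet--Kolmogorov scheme (mollify, apply Arzel\`a--Ascoli on a large ball, assemble a finite net), with the weight handled exactly where it must be: H\"older against the dual weight $\sigma=u^{1-p'}$, which is locally integrable because $u\in A_p$, gives the embedding $L^p_u\hookrightarrow L^1_{\mathrm{loc}}$, the uniform sup bound on $A_rf$ over $\overline{B(0,R)}$, and (via the uniform absolute continuity of the finite measure $\sigma\,dx$ on $B(0,R+r+1)$ applied to $B(x,r)\triangle B(x',r)$) the equicontinuity; hypothesis (3) plus Minkowski's integral inequality gives $\|A_rf-f\|_{L^p_u}\le\varepsilon$ uniformly on $\mathcal F$, and local integrability of $u$ gives the bounded inclusion $C(\overline{B(0,R)})\hookrightarrow L^p_u(B(0,R))$. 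Your quantifier order is consistent ($R$ from (2) and $r\le\delta$ from (3) are chosen independently), the $4\varepsilon$-net assembly is correct, and your closing caveats are well placed: one indeed cannot invoke boundedness of translations on $L^p_u$, and only the differences $f(\cdot+h)-f$ are controlled. This is essentially the same style of argument as in the cited literature (where an alternative is to transfer to an unweighted local $L^s$ via the reverse H\"older/openness properties of $A_p$), but your direct route through $\sigma$ uses only local integrability of $u$ and $\sigma$ and is complete as written.
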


 There are actually several more general versions of this result which do not requiere the $A_p$ membership of the weights. A very general version in the context of BFS was obtained by Guo and Zhao \cite{GZ:GZ}. We refer the reader to that article for additional previous works. The direct extension to weighted mixed Lebesgue spaces below can be obtained adapting the arguments of  \cite{cc:cc}. The result, however, is a particular case of \cite[Theorem 3.1]{GZ:GZ} in the context of BFS and hence its proof is omitted.

\begin{theorem} \label{compact-mixed}Let $1<p,q < \infty$, $u\in A_p$, and $v\in A_q$.  Then $\mathcal F\subset L^p_uL^q_v$ is relatively compact in $L^p_uL^q_v$ if and only if the following conditions hold.
\begin{enumerate} 

\item There exists $K>0$ such that, for every $f\in \mathcal F$, $\|f\|_{ L^p_uL^q_v} \le K$.

\item For every $\varepsilon>0$,  there exists $R>0$ such that
$$
\|\chi_{B(0, R)^c}f \|_{L^p_uL^q_v} <\varepsilon,  $$
for all $f\in \mathcal F$.

\item For every $\varepsilon>0$, there exists $\delta>0$ so that, for every $h=(h_1, h_2)$,  $|h|\le \delta$, 
$$
\|f- f(\cdot+h_1, \cdot+h_2)\|_{ L^p_uL^q_v}\le \varepsilon, 
$$
for all $f\in \mathcal F$.
\end{enumerate}
\end{theorem}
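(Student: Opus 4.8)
The plan is to reduce Theorem \ref{compact-mixed} to the already established one-parameter result, Theorem \ref{ccc}, by iterating in the two groups of variables. The key observation is that the mixed norm is built by first taking the $L^q_v$ norm in the $y$-variable and then the $L^p_u$ norm in the $x$-variable, so that $L^p_uL^q_v = L^p_u(\mathbb R^n; L^q_v(\mathbb R^m))$ is a vector-valued Lebesgue space. Since $u \in A_p(\mathbb R^n)$ and $v \in A_q(\mathbb R^m)$, I would try to apply the scalar Fr\'echet-Kolmogorov criterion of Theorem \ref{ccc} twice: once in $\mathbb R^m$ to control the inner integral uniformly in $x$, and once in $\mathbb R^n$ to control the outer one. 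However, since the referenced source \cite{GZ:GZ} already proves this in the generality of Banach Function Spaces, the cleanest route — and the one the excerpt announces — is simply to invoke \cite[Theorem 3.1]{GZ:GZ} after verifying that $L^p_uL^q_v$ is a Banach Function Space satisfying the structural hypotheses there (in particular that the Hardy-Littlewood maximal operator is bounded on it, which is exactly Corollary \ref{max}, and that it has absolutely continuous norm, which follows from $p,q<\infty$).

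For the forward (necessity) direction I would argue as follows. Given a relatively compact $\mathcal F$, condition (1) is immediate since relatively compact sets are bounded. For conditions (2) and (3), cover $\mathcal F$ by finitely many balls of radius $\varepsilon/3$ centered at $f_1,\dots,f_N\in L^p_uL^q_v$; for each fixed $f_j$, the tail estimate $\|\chi_{B(0,R)^c}f_j\|_{L^p_uL^q_v}\to 0$ as $R\to\infty$ follows from dominated convergence (using $p,q<\infty$ and that $f_j$ has finite mixed norm), and the continuity of translations $\|f_j - f_j(\cdot+h)\|_{L^p_uL^q_v}\to 0$ as $|h|\to 0$ follows from density of continuous compactly supported functions in $L^p_uL^q_v$ together with the maximal function bound giving uniform control of translates — here the $A_p$ and $A_q$ hypotheses are used, as translates of weighted norms are not isometric. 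Taking $R$ and $\delta$ uniform over the finite set $\{f_1,\dots,f_N\}$ and combining with the $\varepsilon/3$-net then yields (2) and (3) for all of $\mathcal F$.

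For the converse (sufficiency) direction, which is the substantive half, the strategy is to handle the two variables sequentially. First, using condition (3) with translations purely in the second variable (i.e. $h=(0,h_2)$) together with conditions (1) and (2), one checks that for a.e.\ $x$ the slices $\{f(x,\cdot): f\in\mathcal F\}$ form a family amenable to the Fr\'echet-Kolmogorov criterion in $L^q_v(\mathbb R^m)$; a mollification argument in the $y$-variable shows $\mathcal F$ is well-approximated by families that are ``finite-dimensional'' in the $y$-direction on a large ball. Then, on that reduced family, condition (3) with $h=(h_1,0)$ and condition (2) feed the Fr\'echet-Kolmogorov criterion of Theorem \ref{ccc} in the $x$-variable (applied to the vector-valued, or after further reduction scalar-valued, function $x\mapsto \|f(x,\cdot)\|_{L^q_v}$), yielding total boundedness. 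The main obstacle is the interplay between the two mollifications: one must mollify in $y$ first and control the error uniformly in $x$ in the $L^p_u$-sense, then mollify in $x$, and ensure that the $A_p$/$A_q$ weights do not destroy the uniform estimates — this is precisely where one needs the precise form of the Fefferman-Stein and maximal inequalities, and it is the reason \cite{GZ:GZ} formulates everything abstractly. Since that reference covers the present setting verbatim as a special case, I would state that the proof is obtained by specializing \cite[Theorem 3.1]{GZ:GZ}, noting that $L^p_uL^q_v$ for $1<p,q<\infty$, $u\in A_p$, $v\in A_q$, is a Banach Function Space with absolutely continuous norm on which $M$ is bounded, and omit the routine verification.
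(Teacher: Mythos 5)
Your treatment of the sufficiency direction coincides with the paper's: the paper also omits the proof and obtains the ``if'' part by observing that it is a special case of \cite[Theorem 3.1]{GZ:GZ} in the setting of Banach function spaces (with the adaptation of the arguments of \cite{cc:cc} as an alternative), and your checklist for specializing that result (boundedness of $M$ on $L^p_uL^q_v$ from Corollary \ref{max}, absolute continuity of the norm since $p,q<\infty$) is consistent with that route.

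The genuine gap is in your necessity argument for condition (3). The step ``continuity of translations follows from density of continuous compactly supported functions together with the maximal function bound giving uniform control of translates'' does not work: pointwise values of a translate are not dominated by the maximal function, and translation operators are not bounded on $L^p_u$ for a general $u\in A_p$. Indeed $\|g(\cdot+h)\|_{L^p_u}=\|g\|_{L^p_{\tau_h u}}$, where $\tau_hu$ has the same $A_p$ constant as $u$ but is not pointwise comparable to $u$, so in the usual three-term estimate the error $\|(f-g)(\cdot+h)\|_{L^p_u}$ is not small merely because $\|f-g\|_{L^p_u}$ is. The obstruction is real: take $n=1$, $u(x)=|x|^{\alpha}$ with $0<\alpha<p-1$ (so $u\in A_p$) and $f(x)=|x|^{-(1+\alpha)/p}\,\bigl|\log|x|\bigr|^{-2/p}\chi_{\{|x|<1/2\}}$; then $f\in L^p_u$, but $\int |f(x+h)|^p u(x)\,dx=\infty$ for every $h\neq 0$, since near $x=-h$ one pays $|x|^{\alpha}\approx|h|^{\alpha}$ against the non-integrable singularity $|x+h|^{-(1+\alpha)}\bigl|\log|x+h|\bigr|^{-2}$. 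Thus even for the (trivially relatively compact) singleton $\{f\}$ the quantity $\|f-f(\cdot+h)\|_{L^p_u}$ is identically $+\infty$ for small $h\neq0$, so translation continuity of individual functions --- the heart of your $\varepsilon/3$-net argument --- cannot be established by your route, and the necessity of (3) in the weighted setting needs a genuinely different justification. The paper itself does not argue this step either; it refers to the unweighted argument of \cite[Section 10, Theorem 2]{bp:bp}, which relies on translation invariance of the norm, so this direction deserves more care than either your sketch or the citation provides. Your necessity arguments for (1) and (2) (boundedness and tail decay via absolute continuity of the norm and a finite net) are fine.
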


We observe that the necessity of the conditions is not stated in \cite{GZ:GZ} but it follows in this context as in \cite[Section 10, Theorem 2]{bp:bp}.

We now extend to the mixed Lebesgue setting the notion of uniformly compactness  introduced   in \cite{cst:cst}.

\begin{definition}  \label{unif} An operator $T$ is uniformly compact in $ L^p_uL^q_v$ with respect to $(A_p, A_q)$ if 
there exist an increasing  function  $\varphi: (0, \infty)\to (0, \infty)$ such that  the following hold. 
\begin{enumerate} 
\item For every every $K\ge 1$, and  every $u\in A_p$ and  $v\in A_q$ with $\max( [u]_{A_p}\, [v]_{A_q})\le  K$ 
$$
\sup_{|f\|_{ L^p_uL^q_v}\le 1}\| Tf\|_{L^p_uL^q_v}\le  \varphi(K),
$$

\item  For every $\varepsilon>0$, every $K\ge 1$, there exists $R=R(\varepsilon, K)>0$ such that
$$
\sup_{\max( [u]_{A_p}\, [v]_{A_q})\le  K}\sup_{\|f\|_{ L^p_uL^q_v}\le 1}\|\chi_{B(0,R)^c} Tf\|_{L^p_uL^q_v}\le   \varphi(K)\varepsilon,
$$

\item For every $\varepsilon>0$ and every $K\ge 1$, there exists $\delta=\delta(\varepsilon,K)>0$ so that, for every  $h=(h_1,h_2)$ with $|h|\le\delta$, 
$$
\sup_{\max( [u]_{A_p}\, [v]_{A_q})\le  K} \sup_{\|f\|_{ L^p_uL^q_v}\le 1}\|Tf-Tf(\cdot+h_1, \cdot+h_2)\|_{ L^p_uL^q_v}\le  \varphi(K) \varepsilon.
$$
\end{enumerate}
\end{definition}

Clearly if an operator is uniformly compact it is also compact, since by Theorem \ref{compact-mixed} the image of the unit ball under $T$ is   precompact. 

  As in \cite{cst:cst}, we will also say that $T$ is uniformly compact in $L^p_{w}(\mathbb R^{n+m})$ if the conditions in Definition \ref{unif} hold with the norm in $L^p_w (\mathbb R^{n+m})$ and all $ w \in A_p(\mathbb R^{n+m})$.

\begin{remark} \label{3.4}
We also say that an operator $T$ is uniformly approximated with respect to $(A_p, A_q)$ by a sequence of operators $\{T_j\}_j$, if  there exist a  function  $\varphi: (0, \infty)\to (0, \infty)$ such that given 
$\varepsilon >0$ and $K\ge1$, there exists $j_0=j_0(\varepsilon, K)$ such that  for all $j>j_0$ 

$$
\sup_{\max([u]_{A_p}, [v]_{A_q})\le K}\sup_{|f\|_{ L^p_uL^q_v}\le 1}\| Tf - T_jf\|_{L^p_uL^q_v}\le  \varphi(K) \varepsilon,
$$
It is easy to see that if  a sequence of operators $\{T_j\}_j$ satisfies that each $T_j$ is uniformly compact in $ L^p_uL^q_v$ with respect to $(A_p, A_q)$   and  uniformly  approximates $T$ with respect to $(A_p, A_q)$, then 
$T$ is not only compact (as the limit of compact operators) but also uniformly compact in $ L^p_uL^q_v$ with respect to $(A_p, A_q)$.  In fact (1) and (2) in Definition \ref{unif} follow at once, while for (3)
we simply observe that, for any $g\in L^p_uL^q_v$,  
$$
\|g(\cdot+h_1, \cdot+h_2)\|_{ L^p_uL^q_v} = \|g\|_{L^p_{\tau_{h_1u}}L^q_{\tau_{h_2v}}},
$$
where $\tau_hu(x)= u(x-h)$ and that for any weight in $A_q$, $[\tau_hu]_{A_q}=[u]_{A_q}$. We leave the details to the reader.
\end{remark}

The following compact extrapolation result was proved in \cite{cst:cst}.

\begin{theorem} \label{ext-lp} If $T$ is an  operator  such that, for some $p_0>1$ 
$$
T:L^{p_0}(w) \longrightarrow L^{p_0}(w)
$$
is uniformly compact  with respect to $A_{p_0}$,  then for every $1<p<\infty$,  
$$
T: L^p_u \longrightarrow  L^p_u
$$
is uniformly  compact with respect to  $A_p$.
\end{theorem}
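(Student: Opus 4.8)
The plan is to transfer the three defining properties of uniform compactness from the initial space $L^{p_0}(w)$ to the target space $L^p_u$ by invoking the quantitative Rubio de Francia extrapolation theorem in the precise form of Theorem \ref{RdeF-bis}. The key observation is that each of the three conditions in Definition \ref{unif} (in the $L^p_w(\mathbb R^{n+m})$ formulation, which here is just $L^p_u$ on the relevant Euclidean space) has the shape ``$\|g\|_{L^{p_0}(w)}\le \varepsilon\,\varphi([w]_{A_{p_0}})\|f\|_{L^{p_0}(w)}$ for all $(f,g)$ in a suitable family $\mathcal F_\varepsilon$", where the family depends on $\varepsilon$ but the associated gain $\varepsilon$ is an honest multiplicative constant that is untouched by extrapolation. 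Concretely: for condition (1) take $\mathcal F=\{(f,Tf):\|f\|\le 1\}$; for condition (2) take $\mathcal F_{\varepsilon,R}=\{(f,\chi_{B(0,R)^c}Tf)\}$; for condition (3) take $\mathcal F_{\varepsilon,h}=\{(f,Tf-Tf(\cdot+h))\}$.

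First I would fix $p>1$ and, given $K\ge 1$, let $M=K$ be the target $A_p$ bound. Theorem \ref{RdeF-bis} produces constants $C_1,C_2$ (depending only on $p,p_0$) and a corresponding $A_{p_0}$-level $W=C_2 M^{\max(1,(p_0-1)/(p-1))}$; set $\widetilde K$ to be this $W$, so that controlling the $L^{p_0}$-operator data at the $A_{p_0}$-scale $\widetilde K$ yields control of the $L^p$-operator data at the $A_p$-scale $K$, with loss only the fixed factor $C_1$. Then I would apply the uniform-compactness hypothesis in $L^{p_0}(w)$ at scale $\widetilde K$: for condition (1) this gives $\sup_{[w]_{A_{p_0}}\le \widetilde K}\sup_{\|f\|\le1}\|Tf\|_{L^{p_0}(w)}\le \varphi(\widetilde K)$, and extrapolating the pair family $\{(f,Tf)\}$ gives $\sup_{[u]_{A_p}\le K}\sup_{\|f\|\le1}\|Tf\|_{L^p_u}\le C_1\varphi(\widetilde K)=:\Phi(K)$, which defines the new increasing gauge $\Phi$. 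For condition (2), given $\varepsilon>0$ and $K$, first pass to $\widetilde K$, let $R=R(\varepsilon,\widetilde K)$ be the radius from the $L^{p_0}$-hypothesis, and feed the family $\{(f,\chi_{B(0,R)^c}Tf):\|f\|\le1\}$ into Theorem \ref{RdeF-bis}: the $L^{p_0}$ bound $\varphi(\widetilde K)\varepsilon$ becomes the $L^p$ bound $C_1\varphi(\widetilde K)\varepsilon=\Phi(K)\varepsilon$ over all $u\in A_p$ with $[u]_{A_p}\le K$, as required, with the same $R$ (now relabelled $R(\varepsilon,K)$). Condition (3) is identical with the family $\{(f,Tf-Tf(\cdot+h)):\|f\|\le1\}$ and the translation parameter $\delta=\delta(\varepsilon,\widetilde K)$ playing the role of $R$; here I would also note, as in Remark \ref{3.4}, that translation by $h$ does not change the $A_p$-constant of a weight, so the supremum over the weight class is preserved under $f\mapsto f(\cdot+h)$.

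The one point that needs a little care, and which I expect to be the main (mild) obstacle, is bookkeeping the composition of gauges: the hypothesis gives a single $\varphi$ governing all three conditions at every $A_{p_0}$-scale, and I must check that after extrapolation the three conclusions are governed by a \emph{single} increasing $\Phi$ on the $A_p$ side. This is handled by setting $\Phi(K):=C_1\,\varphi\bigl(C_2 K^{\max(1,(p_0-1)/(p-1))}\bigr)$, which is increasing because $\varphi$ is increasing and $K\mapsto C_2K^{\max(1,(p_0-1)/(p-1))}$ is increasing; the constants $C_1,C_2$ depend only on $p$ and $p_0$, not on $\varepsilon$, $R$, $h$, or the weights, so the $\varepsilon$-gains and the parameters $R(\varepsilon,K)$, $\delta(\varepsilon,K)$ survive intact. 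A secondary routine check is that the left-hand sides appearing in the extrapolation (e.g. $\|\chi_{B(0,R)^c}Tf\|_{L^{p_0}(w)}$) are finite for the functions in the family, so that Theorem \ref{RdeF-bis} applies legitimately; this is immediate since $Tf\in L^{p_0}(w)$ by condition (1) of the hypothesis. With these observations in place the three conditions of Definition \ref{unif} hold for $T:L^p_u\to L^p_u$ with gauge $\Phi$, which is exactly uniform compactness with respect to $A_p$, completing the proof.
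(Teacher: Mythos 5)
Your proof is correct and takes essentially the same route as the paper: Theorem \ref{ext-lp} is quoted from \cite{cst:cst}, and both there and in the paper's proof of the mixed-norm analogue (Theorem \ref{main}) the argument is exactly your three-fold application of quantitative Rubio de Francia extrapolation (Theorem \ref{RdeF-bis}) to the pair families $(f,Tf)$, $(f,\chi_{B(0,R)^c}Tf)$, $(f,Tf-Tf(\cdot+h))$, with the $\varepsilon$-gains riding along as multiplicative constants and the weight-constant scales rescaled as $K\mapsto C_2K^{\max(1,(p_0-1)/(p-1))}$. Your bookkeeping of the single increasing gauge $\Phi$ and of the parameters $R(\varepsilon,K)$, $\delta(\varepsilon,K)$ matches the intended argument.
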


Our main theorem now reads as follows. 

\begin{theorem}\label{main}  If $T$ is an  operator  such that, for some $p_0>1$ 
$$
T:L^{p_0}(w) \longrightarrow L^{p_0}(w)
$$
is uniformly compact  with respect to $A_{p_0}(\mathbb R^{n+m})$,  then for every $p, q>1$,  
$$
T: L^p_uL^q_v \longrightarrow  L^p_uL^q_v
$$
is uniformly  compact with respect to  $(A_p, A_q)$.
\end{theorem}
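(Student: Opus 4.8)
The plan is to mirror the structure of the proof of Theorem \ref{ext} (the extrapolation of continuity to mixed Lebesgue spaces), but now carrying along the three quantitative conditions of Definition \ref{unif} at each stage, and feeding everything through the \emph{quantitative} compact extrapolation machinery rather than just Theorem \ref{rfrf}. Concretely, the hypothesis is that $T:L^{p_0}(w)\to L^{p_0}(w)$ is uniformly compact with respect to $A_{p_0}(\mathbb R^{n+m})$. By Theorem \ref{ext-lp} (the diagonal compact extrapolation result of \cite{cst:cst}), for the fixed exponent $q>1$ we may upgrade this to: $T:L^{q}(w)\to L^{q}(w)$ is uniformly compact with respect to $A_{q}(\mathbb R^{n+m})$. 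So without loss of generality we may assume $p_0=q$ from the outset, i.e.\ $T$ is uniformly compact on $L^q(w)$, $w\in A_q(\mathbb R^{n+m})$.

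The second step is to "freeze" the second variable. Fix $v\in A_q(\mathbb R^m)$ with $[v]_{A_q}\le K$. For $u\in A_q(\mathbb R^n)$ with $[u]_{A_q}\le K$ we have $u\otimes v\in A_q(\mathbb R^{n+m})$ with $[u\otimes v]_{A_q}\le [u]_{A_q}[v]_{A_q}\le K^2$, exactly as in the proof of Theorem \ref{ext}. Since $L^q_uL^q_v=L^q(u\otimes v)$, each of the three conditions of Definition \ref{unif} for $T$ on $L^q(w)$, $w\in A_q(\mathbb R^{n+m})$, restricts to give the corresponding three conditions for $T$ on $L^q_uL^q_v$ with the supremum taken over $[u]_{A_q},[v]_{A_q}\le K$ (the control function becoming $K\mapsto\varphi(K^2)$); here the geometry of balls in $\mathbb R^{n+m}$ versus products of balls causes no trouble since $B_{\mathbb R^n}(0,R)\times B_{\mathbb R^m}(0,R)\subset B_{\mathbb R^{n+m}}(0,\sqrt2 R)$, and translations in $\mathbb R^{n+m}$ of the form $(h_1,h_2)$ are exactly the translations appearing in condition (3). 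Thus $T$ is uniformly compact on $L^q_uL^q_v$ with respect to $(A_q,A_q)$, with $v$ ranging over all of $A_q(\mathbb R^m)$.

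The third and decisive step is to extrapolate the first exponent from $q$ to an arbitrary $p>1$ while keeping $q$ fixed, using the \emph{quantitative} statement Theorem \ref{RdeF-bis} together with the linearization $G_f(x)=\|f(x,\cdot)\|_{L^q_v}$ that already appears in the proof of Theorem \ref{ext}. For each of the three conditions we set up the appropriate family $\mathcal F$ of pairs: for (1) the pairs $(F,G)$ with $G(x)=\|(Tf)(x,\cdot)\|_{L^q_v}$ and $F(x)=\|f(x,\cdot)\|_{L^q_v}$; for (2) the pairs $\big(F,\ \chi_{B(0,R)^c}^{(n)}G\big)$ (the cutoff in the $x$-variable) plus an inner tail handled separately; for (3) the pairs involving $\|(Tf-\tau_hTf)(x,\cdot)\|_{L^q_v}$. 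One must be a little careful here: the displayed bounds in Definition \ref{unif} are bounds on $\|\cdot\|_{L^q_uL^q_v}$ over the unit ball of $L^q_uL^q_v$, i.e.\ on $\|G\|_{L^q_u}$ over $\|F\|_{L^q_u}\le 1$, which is precisely a bound of the form $\|G\|_{L^q_u}\le C\|F\|_{L^q_u}$ for all $u\in A_q(\mathbb R^n)$ with $[u]_{A_q}\le$ (a given bound) \emph{and with $C$ uniform in $v$, $[v]_{A_q}\le K$}. Feeding this into Theorem \ref{RdeF-bis} (with $p_0=q$, and with $W\sim K^{\text{const}}$ chosen so that the quantitative balance $C_2M^{\max(1,(q-1)/(p-1))}=W$ holds) produces, for every $p>1$, a bound $\|G\|_{L^p_u}\le C_1\varphi\cdot\|F\|_{L^p_u}$ for all $u\in A_p(\mathbb R^n)$ with $[u]_{A_p}\le M$, still uniform in $v$ with $[v]_{A_q}\le K$. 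Translating back, $\|(Tf)\|_{L^p_uL^q_v}\lesssim\|f\|_{L^p_uL^q_v}$, and similarly the cutoff and translation quantities are $\le\varphi'(K)\varepsilon$, which is exactly conditions (1)--(3) of Definition \ref{unif} for $T$ on $L^p_uL^q_v$ with respect to $(A_p,A_q)$.

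The main obstacle, and the point requiring the most care, is the bookkeeping of \emph{uniformity} through the extrapolation: the quantity $\varepsilon$ (the smallness in the tail/equicontinuity conditions) must be carried through Theorem \ref{RdeF-bis} as a fixed multiplicative constant in front of the right-hand side, so that the constant $C_1$ produced by that theorem does not secretly depend on $\varepsilon$ in a way that destroys the "$\le\varphi(K)\varepsilon$" conclusion. This is handled by the usual device of applying the extrapolation theorem to the \emph{normalized} family, i.e.\ to pairs $(F,\ \varepsilon^{-1}\,(\text{tail of }G))$, so that the hypothesis of Theorem \ref{RdeF-bis} holds with a $\varphi(W)$ independent of $\varepsilon$, and then multiplying back by $\varepsilon$ at the end; one also needs the monotonicity/explicit-dependence remark after Theorem \ref{ext} to see that the resulting control functions are genuinely increasing functions of $\max([u]_{A_p},[v]_{A_q})$. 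A secondary, more routine, point is verifying that the left-hand sides to which \eqref{FSppstrong}-type finiteness hypotheses (or here, the finiteness needed to legitimately apply Theorem \ref{RdeF-bis}) are indeed finite — this follows because $T$ is already known to map $L^q_uL^q_v$ boundedly to itself from step two, so all the relevant norms are a priori finite. Once these points are settled the three conditions assemble to exactly Definition \ref{unif} and the theorem follows; the argument is essentially identical to, and should be presented in parallel with, the proof of Theorem \ref{ext-lp} in \cite{cst:cst}.
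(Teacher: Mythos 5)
Your proof is correct and follows essentially the same route as the paper: first upgrade via the compact extrapolation of \cite{cst:cst} on full-space weighted Lebesgue spaces, and then run the mixed-norm Rubio de Francia extrapolation (tensor weights $u\otimes v$ together with the linearization $G(x)=\|\cdot\|_{L^q_v}$) on the three families of pairs $(f,Tf)$, $(f,\chi_{B(0,R)^c}Tf)$, $(f,Tf-Tf(\cdot+h_1,\cdot+h_2))$ encoding Definition \ref{unif}. The only difference is presentational: the paper simply invokes Theorem \ref{ext} three times, whereas you unpack its proof and use the quantitative Theorem \ref{RdeF-bis} to track the restricted weight class $[w]_{A_{p_0}}\le K$ and to carry the factor $\varepsilon$ through as a fixed multiplicative constant --- a more explicit bookkeeping of the same argument.
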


\begin{proof}  
By the compact extrapolation result in \cite{cst:cst} we immediately get that for all $1<p<\infty$, 
$$T:L^{p}(w) \longrightarrow L^{p}(w)
$$
is uniformly compact  with respect to $A_{p}(\mathbb R^{n+m})$.

Let us now fix $K\ge 1$, $p, q>1$ and $u\in A_{p}$ and $v\in A_q$ such that $\max( [u]_{A_p}, [v]_{A_q})\le K$.  We need to verify the  three conditions in Definition \ref{unif}. We will see that they follow  by  applying three times Theorem \ref{ext}.

For 
({\it 1)},   let $\|f\|_{L^p(L^q w)}\le 1$. Then, 
$$
\|Tf\|_{L^p_uL^q_v} \le \phi([u]_{A_p}, [v[_{A_q}) \le \varphi(K), 
$$
if we use  $(f,g)= (f,T(f))$ in Theorem \ref{ext}.

For {\it (2)},  since for every  $K>0$ and  every $\varepsilon>0$ there exists $R>0$ such that
$$
\bigg(\int_{|x|\ge R} |Tf(x)|^{p_0} w(x) dx\bigg)^{1/p_0} \lesssim  \varphi(K) \varepsilon \bigg(\int_{\mathbb R^n} |f(x)|^{p_0} w(x) dx\bigg)^{1/p_0}, 
$$
whenever $[w]_{A_{p_0}}\le K$,  then  by Theorem \ref{ext} with 
$$(f,g) = (f,\chi_{B(0,R)^c} Tf),$$
 for every $u$ and $v$ so that $\max( [u]_{A_p}, [v]_{A_q})\le K$, there exists $R>0$ so that 
$$
\|\chi_{B(0,R)^c} Tf\|_{L^p_uL^q_v}\lesssim   \psi(K, K)  \varepsilon  \|f\|_{L^p_uL^q_v},
$$
and the result follows.

Similarly, for {\it(3)}  we use the extrapolation theorem with 
$$(f,g) = (f, (Tf (x + t) - Tf (x))). $$
\end{proof}

\section{Applications}

\subsection{Commutators of singular integrals and point-wise multiplication}

There exist a very extensive and ever growing literature about the boundedness and compactness of commutators of various singular integrals and point-wise multiplication by function in a variety of contexts, including linear, multilinear, and weighted settings. In general commutation with function in $BMO$ produces bounded operators, while commutation with functions in $CMO$ produces compact ones. We refer the, reader for example, to \cite{bmmst:bmmst} and \cite{bt:bt} for comprehensive approaches and references. 
  Here $BMO$ is the usual space of functions of 
bounded mean oscillations, while $CMO$ is defined as the closure in the topology of $BMO$ of the space $C_c^{\infty}$ of smooth functions with compact support.

The first to prove the compactness of the commutators of Calder\'on-Zygmund operators with $CMO$-functions in a weighted $L^p$ setting were Clopp and Cruz in the already cited work \cite{cc:cc}.  In order to apply our theory to extrapolate this result to weighted mixed Lebesgue spaces we need to verify that those authors proved that the operators in question are actually uniformly compact in $L^p$ with respect to $A_p$.  This can be done by inspection of their proof. We will not repeat all of their computations here but, for the benefit of the reader, we shall summarize some key points where estimates depending on the classes of weights are obtained.

For simplicity and the purposes of this article, we define a Calder\'on-Zygmund operator $T$ to be a  bounded  linear operator on $L^2(\rn)$   that  can be written as 
$$
Tf(x)=\int_{\rn}  \KK(x,y)f(y) \ dy
$$
for $x\notin {\rm supp} \, f$,   where  the kernel $\KK$ satisfies 
\begin{equation}\label{regularity}
|\partial^{\beta}\KK(x,y)|\leq C_\KK|x-y|^{-n-|\beta|}, \quad |\beta|\leq 1.
\end{equation}
We will often write in such a case $T\in CZO$.

 The compactness in the following result can be obtained using the BFS machinery as done in \cite[Theorem 6.1]{LN:LN} once the boundedness  of $M$ in mixed Lebesgue spaces is stablished (for example in Corollary~\ref{max} or \cite{k:k}).  We want to emphasize again that with the theory just developed we obtain uniform compactness with straightforward computations.

\begin{theorem} \label{CommCompa}
Let $T \in CZO$ and $b\in CMO$ and consider the commutator
$$
C_bf=[T,b]f = T(bf)- bTf.
$$
Then, for every $u\in A_p(\rn)$ and every $v\in A_q(\mathbb R^m)$
$$
C_b: L^p_uL^q_v \longrightarrow L^p_uL^q_v
$$
is   uniformily   compact. 
\end{theorem}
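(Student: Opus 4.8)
The plan is to peel off the extrapolation step at once and then do all the real work at a single exponent. By the mixed-space compact extrapolation of Theorem~\ref{main}, it suffices to prove that
$$
C_b : L^2(w) \longrightarrow L^2(w)
$$
is uniformly compact with respect to the ordinary class $A_2(\mathbb R^{n+m})$. This, in turn, follows by inspecting the Clop--Cruz argument \cite{cc:cc}: one combines a quantitative weighted bound with an approximation of $b$ by smooth symbols, in the spirit of Remark~\ref{3.4}, and then keeps track of where the weight constants enter.

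\emph{Step 1 (quantitative boundedness).} Using a pointwise sharp maximal function estimate for commutators of the form $M^\#_\delta(C_b f)\lesssim \|b\|_{BMO}\bigl(M_\varepsilon(Tf)+M^{2}f\bigr)$ for $0<\delta<\varepsilon<1$ (here $M^{2}=M\circ M$), together with the Fefferman--Stein inequality \eqref{FSppstrong} and the weighted $L^2$ bounds for $T$ and for $M$, one produces an increasing function $\Phi$ with
$$
\|C_b f\|_{L^2(w)} \le \Phi([w]_{A_2})\,\|b\|_{BMO}\,\|f\|_{L^2(w)}, \qquad w\in A_2(\mathbb R^{n+m}).
$$
This is exactly where the precise openness property \eqref{rprime}--\eqref{eq:Ap-e} and the precise reverse Hölder inequality \eqref{eq:RHI} are used, since the maximal-function machinery forces one to pass to $A_{2-\varepsilon}$ with both $\varepsilon$ and $[w]_{A_{2-\varepsilon}}$ controlled by $[w]_{A_2}$. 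In particular condition (1) of Definition~\ref{unif} holds on $L^2(w)$, and, applied to $C_{b-b'}=C_b-C_{b'}$, the same estimate bounds the $L^2(w)$ operator norm of $C_{b-b'}$ by $\Phi([w]_{A_2})\,\|b-b'\|_{BMO}$.

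\emph{Step 2 (approximation and smooth symbols).} Since $b\in CMO$, choose $b_j\in C_c^\infty$ with $\|b-b_j\|_{BMO}\to 0$. By Step~1, $\{C_{b_j}\}_j$ uniformly approximates $C_b$ with respect to $A_2$ in the sense of Remark~\ref{3.4}, so it remains to check that each $C_{b_j}$ is uniformly compact on $L^2(w)$ with respect to $A_2$, i.e.\ that conditions (2) and (3) of Definition~\ref{unif} hold with a bound $\varphi(K)\varepsilon$ (the function $\varphi$ may also depend on $j$, $n$, $m$, and on the support radius and $C^1$ norm of $b_j$). For (2): if $b_j$ is supported in $B(0,r_j)$ and $|x|\ge 2r_j$, then $b_j(x)Tf(x)=0$, while by \eqref{regularity} and Hölder with $\sigma=w^{-1}$ one has $|T(b_jf)(x)|\lesssim \|b_j\|_\infty\,|x|^{-n}\,\sigma(B(0,r_j))^{1/2}\,\|f\|_{L^2(w)}$; integrating against $w$ over $\{|x|\ge R\}$ and using \eqref{eq:prop-Ap-set} to control the $w$-measures of the large dyadic annuli around the origin, with constants depending only on $[w]_{A_2}$, gives the required tail smallness for $R=R(\varepsilon,K)$ large. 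For (3) one decomposes
$$
C_{b_j}f(x+h)-C_{b_j}f(x) = \bigl(T(b_jf)(x+h)-T(b_jf)(x)\bigr) - \bigl(b_j(x+h)-b_j(x)\bigr)Tf(x) - b_j(x)\bigl(Tf(x+h)-Tf(x)\bigr),
$$
estimating the middle term via $|b_j(x+h)-b_j(x)|\le\|\nabla b_j\|_\infty\,|h|$ and the $L^2(w)$ bound for $T$, and the two outer terms by the equicontinuity of $T(b_jf)$ and of $Tf$: each is split into a local piece (controlled by the $L^2(w)$-boundedness of $T$ over a small ball, again through \eqref{eq:prop-Ap-set}) and a far piece (controlled by the kernel regularity \eqref{regularity}, which supplies the extra factor $|h|\,|x-y|^{-n-1}$, and a weighted maximal estimate, e.g.\ Corollary~\ref{max}). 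Choosing $\delta=\delta(\varepsilon,K)$ small yields (3). By Remark~\ref{3.4}, $C_b$ is then uniformly compact on $L^2(w)$ with respect to $A_2(\mathbb R^{n+m})$, and Theorem~\ref{main} transfers this to $L^p_uL^q_v$ for all $p,q>1$.

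\emph{Main obstacle.} The extrapolation step is free; the work is entirely in verifying that every constant arising in the Clop--Cruz estimates behind Step~2 is \emph{quantitatively uniform} over all $w$ with $[w]_{A_2}\le K$ — that is, can be exhibited as an increasing function of $[w]_{A_2}$. This is precisely why the preliminaries isolate the precise openness and reverse Hölder properties \eqref{rprime}, \eqref{eq:Ap-e}, \eqref{eq:RHI}: they allow one to run the sharp-maximal / good-$\lambda$ machinery with all exponents and all weight constants expressed through $[w]_{A_2}$, which is exactly the content of uniform compactness as in Definition~\ref{unif}.
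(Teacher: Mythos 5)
Your overall architecture (reduce via Theorem~\ref{main} to uniform compactness of $C_b$ on $L^2(w)$ with respect to $A_2(\mathbb R^{n+m})$, then approximate $b\in CMO$ by $b_j\in C_c^\infty$ using a quantitative weighted commutator bound) is exactly the paper's. But your verification of condition (3) of Definition~\ref{unif} for $C_{b_j}$ contains a genuine gap: you split
$$
C_{b_j}f(\cdot+h)-C_{b_j}f=\bigl(T(b_jf)(\cdot+h)-T(b_jf)\bigr)-\bigl(b_j(\cdot+h)-b_j\bigr)Tf-b_j\bigl(Tf(\cdot+h)-Tf\bigr),
$$
and claim the two outer terms are small by ``equicontinuity of $T(b_jf)$ and of $Tf$.'' That cannot work: neither $f\mapsto T(b_jf)$ nor $f\mapsto b_jTf$ is a compact operator (take $T$ the Riesz or Beurling transform and $b_j\equiv 1$ on a ball; modulated bumps supported there give a bounded sequence whose images have no convergent subsequence), so the families $\{T(b_jf)\}$ and $\{b_jTf\}$ over the unit ball of $L^2(w)$ do \emph{not} satisfy the uniform translation-continuity condition. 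Concretely, your ``local piece'' is only \emph{bounded}, not small: the difference between $T$ and its truncation at scale $|h|$ has operator norm bounded below uniformly in $h$ (by scaling, for convolution CZOs), so no choice of $\delta(\varepsilon,K)$ makes it $\lesssim\varepsilon$. The compactness lives entirely in the commutator cancellation: one must keep the kernel $(b_j(x)-b_j(y))\KK(x,y)$ together, where $|b_j(x)-b_j(y)|\lesssim\|\nabla b_j\|_\infty|x-y|$ tames the singularity. This is why Clop--Cruz (and the paper's Step 2) first replace $\KK$ by a smooth truncation $\KK^\eta$, with error $|C_{b_j}f-C^\eta_{b_j}f|\lesssim\eta\|\nabla b_j\|_\infty Mf$, and only then verify (2) and (3) for the truncated commutator, whose kernel is bounded and regular.

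A secondary quantitative slip: in your condition (2) you control the tail $\int_{|x|\ge R}|x|^{-2d}w\,dx$ using only the $A_2$ growth estimate \eqref{eq:prop-Ap-set}, but at exponent $2$ each dyadic annulus contributes $\approx[w]_{A_2}w(B(0,1))$ with no decay, so the sum diverges and there is no smallness in $R$. The uniform smallness requires passing to $A_{2-\varepsilon}$ with $\varepsilon\approx 1/(c_d[w]_{A_2})$ via the precise openness property \eqref{rprime}--\eqref{eq:Ap-e} (and the precise reverse H\"older inequality \eqref{eq:RHI} for the companion factor $\int_{4B_0}w^{-r}$), which produces the decaying factor $R^{d(q-2)}\le R^{-d/(c_dK)}$ uniformly over $[w]_{A_2}\le K$. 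This quantification of the Clop--Cruz remainder terms \eqref{E1}--\eqref{E2} is precisely the content of the paper's Step 3 and is where the actual work lies; your ``main obstacle'' paragraph correctly identifies it, but the estimates you sketch do not yet carry it out, and the treatment of (3) must be redone through the commutator kernel rather than term by term.
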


\begin{proof}  Let $d:=n+m$. To prove that $C_b: L^p_uL^q_v \longrightarrow L^p_uL^q_v$ is compact whenever 
$u\in A_p(\rn)$ and $v\in A_q(\mathbb R^m)$, we will use  the extrapolation Theorem \ref{main} with exponent $p_0=2$, namely  we have to prove that 
$$
C_b: L^{2}(w) \longrightarrow L^{2}(w)
$$
is uniformly compact  with respect to $w\in A_{2}(\mathbb R^d)$.

We shall follow the proof of \cite{cc:cc} checking all the details that remains open in our setting.

\noindent
{\bf Step 1:} 
Let us approximate $b \in CMO$ by functions $b_j \in C^\infty_c$.  By a result in \cite{p:p}, we have that,  for  $w\in A_2(\mathbb R^d)$, 
$$
\|C_bf\|_{L^2(w)} \lesssim \|b\|_* \|M^2f\|_{L^2(w)},
$$
where $|b\|_*$ is the $BMO$ norm. Therefore, 
$$
\|C_bf - C_{b_j}\|_{L^2(w)} \lesssim \|b-b_j\|_* [w]_{A_2}^{2}\|f\|_{L^2(w)},
$$
and $C_{b_j} \to C_b$ uniformly on  $[w]_{A_{2}(\mathbb R^d)}\le K$, with $K$ a fixed and arbitrary constant. So it is enough to verify that the $C_{b_j}$ are uniformly compact.


\noindent
{\bf Step 2:} Next, for each $j$ the operators $C_{b_j}$ are then approximated by $C^\eta_{b_j}$ which is the commutator obtained by replacing the kernel $\KK$ of $T$ by a smooth truncated version $\KK^\eta$, where $\eta$ is an appropriate parameter. The authors of \cite{cc:cc} show that 
$$|Cb_j f (x) -Cb_j^\eta (x)| \lesssim  \eta \| \nabla b_j\|_\infty M f (x),$$ 
from which it follows that $Cb^\eta_j \to Cb_j$ in the uniform sense as $\eta \to 0$. 


\noindent
{\bf Step 3:}  For a fixed $b$ and $\eta$,  it is then needed to prove the  compactness of $C_b^\eta$. To this end, we will  use the version of the Fr\'echet-Kolmogorov result in Theorem \ref{ccc}.  Condition {\it (1)}  is uniformly satisfied by the already mentioned estimates on the commutator.   To verify {\it (2)} and 
 {\it (3)}, Clop and Cruz separate the quantities in questions into several terms. They obtained several pointwise estimates ultimately controlled by $M$ and $T_*$ (the maximal truncated singular integral), see \cite[p.98]{cc:cc}. These terms clearly produce uniform estimates with respect to $A_{2}(\mathbb R^d)$.

 Finally there are some terms which are dominated by  expressions that can be  controlled (up to constants) by
   
 \begin{align} \label{E1}
  \int_{3B_0}   &  \left( \int_{B_x\cap 4B_0} w(y)^{- 1} \, dy   \right)  w(x) \, dx   \nonumber \\
  &  + \,\,  \int_{^c(3B_0)}   \left( \int_{B_x\cap 4B_0} w(y)^{- 1} \, dy\right)   \frac {w(x)} {|x|^{2d}} \, dx, 
 \end{align} 
where $B_0$ is a fixed ball centered at the origin of radius $R_0$, which we can take bigger than 1, and  $B_x$ is a set depending on $x$ but with size $|B_x| \approx h$ where $|h| \to 0$;
and
\begin{equation} \label{E2}
\left( \int_{|x|<R_0} w(x)^{-1} dx\right)^{1/2} \left( \int_{|x|>R} \frac {w(x)} {|x|^{2d}} dx \right)^{1/2},
\end{equation} 
with  $R>R_0$ fixed, $R\to \infty$;  see \cite[pp.99-100]{cc:cc}.  We need to show that these terms, which obviously tend to zero when $h\to 0$ in \eqref{E1} and  $R\to \infty$ in \eqref{E2}, do so uniformly on $[w]_{A_2}$, whenever $[w]_{A_2}\le K$ with $K$ any fixed number.


\noindent
{\bf Estimates for \eqref{E1}}: Note that by H\"older's inequality we get that, for any $r>1$, 
$$
\left( \int_{B_x\cap 4B_0} w(y)^{-1} \, dy\right)  \le \left( \int_{ 4B_0} w(y)^{-r} dy\right)^{1/r}  |B_x|^{1/r'}. 
$$
To control \eqref{E1}, we shall choose $r_1>1$ and $r_2>1$ appropriately and  estimate 
\begin{equation} \label{E3}
I := \left( \int_{4B_0} w(x)^{-r_1} dx\right)^{1/r_1} \left( \int_{ 3B_0} w(x) dx\right)\, \,  |h|^{1/r_1'}
\end{equation} 
and
\begin{equation} \label{E32}
II := \left( \int_{4B_0} w(x)^{-r_2} dx\right)^{1/r_2} 
 \int_{ (3B_0)^c} \frac {w(x)} {|x|^{2d}} dx \,\,  |h|^{1/r_2'}.
\end{equation} 
To estimate $I$ in \eqref{E3}, and using that $w\in A_2$ implies $w^{-1}\in A_2$ with same $A_2$ constant,  we can choose  $q= 2-\varepsilon$ with 
$$\varepsilon= \frac{1}{c_d [w]_{A_{2}} }= \frac{1}{c_d [w^{-1}]_{A_{2}} }  \leq \frac{1}{c_d [w^{-1}]_{A_{\infty}} } \text{\, and \, }
%
[w]_{A_q} \leq 2 [w]_{A_2}
$$
by the precise openness property of the $A_2$ class of weights (see \eqref{rprime} and \eqref{eq:Ap-e}).
It  follows that with $r_1=\frac{q'}{q}>1$,
$$  
I \le \left(  \avgint_{4B_0} w(x)^{-r_1} dx\right)^{1/r_1}   \avgint_{ 4B_0} w(x) dx\, \,  |4B_0|^{1+1/r_1} |h|^{1/r_1'} ,
$$
$$
\leq [w]_{A_q}  |4B_0|^{1+1/r_1} |h|^{1/r'_1}  \leq 2    [w]_{A_2}  |4B_0|^{1+1/r}  |h|^{1/r'_1}. 
$$
Now, fixed $K$  and $[w|_{A_2}\le K$, it can be check that 
$$
r_1=\frac1{q-1}=\frac 1{1-\varepsilon} \implies  r_1'=\frac 1{\varepsilon}=c_d[w]_{A_2} \le c_d K,
$$
and hence
$$
I \le K |4B_0|^2  |h|^{1/(c_d K)},
$$
which  tends to zero  as $|h| \to 0$, uniformly on  $[w]_{A_2}\le K$.

We now estimate $II$ in  \eqref{E32}. Setting $R_1$ equal to the radius of $3B_0$, 
$$
\int_{|x|>R_1} \frac {w(x)} {|x|^{2d}} dx = \sum_{j=0}^\infty \int_{2^jR_1<|x|\leq 2^{j+1}R_1} \frac {w(x)} {|x|^{2d}} dx
$$
$$
\leq  \sum_{j=0}^\infty  (2^jR_1)^{-2d} w(B(0,2^{j+1}R_1).
$$
 Let $q$ be the exponent from the precise openness property as above. Then, by 
standard properties of the $A_p$ class of weights (see \eqref{eq:prop-Ap-set})  we have 
that for any pair of balls $B_1\subset B_2$, that
\begin{equation}\label{Approp}
w(B_2)
\leq [w]_{A_q}
\left (\frac{|B_2|}{|B_1|} \right )^q w(B_1)
\end{equation}
and we can continue with 
$$
 \lesssim [w]_{A_q}\sum_{j=0}^\infty  (2^jR_1)^{-2d} (2^{j+1}R_1)^{qd} w(B(0,1))
 \lesssim [w]_{A_2} w(B(0,1)) R_1^{d(q-2)},
$$
since $q<2$. 

Now, taking $r_2 = 1+\frac{1}{2^{d+1}[w]_{A_2}-1} \leq r_{w^{-1}}$ from the 
precise reverse H\"older property \eqref{romega}, we have as in \eqref{eq:RHI}
$$
\left(  \avgint_{4B_0} w(x)^{-r_2} dx\right)^{1/r_2} \leq 2
\avgint_{4B_0} w(x)^{-1} dx,
$$
and we can use again \eqref{Approp} and $[w^{-1}]_{A_2}=[w]_{A_2}$  to obtain
$$
\avgint_{4B_0} w(x)^{-1} dx \lesssim [w]_{A_2}   w^{-1}(B(0,1)) |4B_0|.
$$
So all together we obtain that, for some constant $C_K$,

\begin{eqnarray*}
II &\lesssim& [w]_{A_2}^{2} |4B_0|^{1+\frac1r_2}  R_1^{n(q-2)}     w(B(0,1)) w^{-1}(B(0,1))|h|^{1/r_2'} 
 \\
&\lesssim& [w]_{A_2}^{3} |4B_0|^{2}  R_1^{d(q-2)}   |h|^{C_K},
\end{eqnarray*}
which again tends to zero uniformly in $[w]_{A_2}\le K$ as $|h| \to 0$.


\noindent
{\bf Estimate for \eqref{E2}}: %

We have, as before,  that since $q<2$, 
$$
\int_{|x|>R} \frac {w(x)} {|x|^{2d}} dx = \sum_{j=0}^\infty \int_{2^jR<|x|\leq 2^{j+1}R} \frac {w(x)} {|x|^{2d}} dx
 \lesssim  [w]_{A_2} w(B(0,1)) R^{d(q-2)}.
$$
Also, for the first factor in \eqref{E2} we use again \eqref{Approp}, 
$$
\int_{|x|<R_0} w(x)^{-1} dx \lesssim [w]_{A_2} R_0^{qd} w^{-1}(B(0,1)).
$$
So combining all the estimates,  \eqref{E2} is controlled up to a constant by 
$$
[w]_{A_2}^3 R^{d(q-2)} R_0^{qd}
$$
which tends to zero  uniformly on $[w]_{A_2}\le K$ as $R\to \infty$.
\end{proof}
\subsection{Pseudodifferential operators}

The first example of non-trivial compact pseudodifferential  operators on weighted Lebesgue spaces that we are aware of was obtained in 
\cite{cst:cst}.  The authors combined conditions on the symbol considered by Cordes \cite{c:c} to produce compact operators on $L^2$ with additional conditions that provide a priori boundedness in $L^p$ for $p\neq 2$.  More precisely the following was obtained in \cite{cst:cst}.

\begin{theorem}\label{pseudos}
Let $$T_\sigma f(x) =  \int_ {\rd} \sigma(x,\xi) \widehat f(\xi)  e^{ix \xi }\, d\xi$$ 
be a pseudodifferential operator  with symbol $\sigma$ satisfying the conditions 
\begin{equation}\label{conditions}
|\partial_x^\alpha \partial_{\xi}^\beta \sigma(x,\xi)| \lesssim C_{\alpha,\beta}(x,\xi) (1+|\xi|)^{-|\beta|},
\end{equation}
for all  $|\alpha|, |\beta|$, where $C_{\alpha,\beta}$ is a bounded function which tends to zero as $|x|^2+|\xi|^2 \to \infty$.
 Then $T_\sigma$ is compact in $L^p(w)$ for all $1<p<\infty$ and all $w \in A_p$.
\end{theorem}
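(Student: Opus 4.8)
The plan is to deduce the theorem from the compact extrapolation Theorem~\ref{ext-lp} with initial exponent $p_0=2$: it is enough to show that $T_\sigma:L^2(w)\to L^2(w)$ is \emph{uniformly} compact with respect to $A_2$, since Theorem~\ref{ext-lp} then promotes this to uniform compactness of $T_\sigma$ on $L^p(w)$ for every $1<p<\infty$ and every $w\in A_p$, and uniform compactness implies compactness. Two features of the symbol class will be used repeatedly. First, the hypothesis \eqref{conditions} gives $|\partial_x^\alpha\partial_\xi^\beta\sigma|\lesssim(1+|\xi|)^{-|\beta|}$ for all $\alpha,\beta$, so $\sigma$ belongs to the H\"ormander class $S^0_{1,0}$; consequently $T_\sigma\in CZO$ in the sense of \eqref{regularity} with the kernel constant $C_\KK$ (and the $L^2$ bound) controlled by finitely many seminorms of $\sigma$, and the weighted estimate for Calder\'on--Zygmund operators gives $\|T_\sigma\|_{L^2(w)\to L^2(w)}\le\Phi([w]_{A_2})\,\|\sigma\|_{*}$, where $\Phi$ is increasing and $\|\sigma\|_{*}$ is a suitable finite sum of symbol seminorms. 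Second, if $\sigma$ is in addition compactly supported in $(x,\xi)$, then integrating by parts in $\xi$ shows that the kernel $K_\sigma(x,y)=\int\sigma(x,\xi)e^{i(x-y)\xi}\,d\xi$, together with all of its $x$-derivatives, is dominated by Schwartz functions on $\mathbb R^{2d}$ that vanish for $|x|$ large.

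The first step is an approximation, following \cite{cst:cst}. Fix $\psi\in C_c^\infty$ with $\psi\equiv1$ near the origin and put $\sigma_N(x,\xi)=\sigma(x,\xi)\,\psi(x/N)\psi(\xi/N)$. Because the bounding functions $C_{\alpha,\beta}(x,\xi)$ vanish as $|x|^2+|\xi|^2\to\infty$, the symbol $\sigma-\sigma_N$ still lies in $S^0_{1,0}$ and the seminorms entering $\|\cdot\|_{*}$ tend to $0$ as $N\to\infty$, so $\|T_\sigma-T_{\sigma_N}\|_{L^2(w)\to L^2(w)}\le\Phi([w]_{A_2})\,\varepsilon_N$ with $\varepsilon_N\to0$. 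Hence $T_{\sigma_N}\to T_\sigma$ uniformly on $\{[w]_{A_2}\le K\}$ for every $K$, i.e.\ in the sense of the $L^p_w$ analogue of Remark~\ref{3.4}; by that remark it therefore suffices to prove that each $T_{\sigma_N}$ is uniformly compact on $L^2(w)$ with respect to $A_2$.

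For this I would verify the three conditions of Definition~\ref{unif} for $T_{\sigma_N}$, uniformly over $\{[w]_{A_2}\le K\}$, directly from the kernel $K_N$. Condition (1) is the quantitative bound just recalled. Condition (2) is immediate, since $K_N$ vanishes for $|x|$ larger than the radius of the $x$-support of $\sigma_N$, so $\chi_{B(0,R)^c}T_{\sigma_N}\equiv0$ once $R$ is large, uniformly in $w$. For (3), write
$$
T_{\sigma_N}f(x)-(T_{\sigma_N}f)(x+h)=\int\big(K_N(x,y)-K_N(x+h,y)\big)f(y)\,dy,
$$
bound the difference of kernels by $|h|\sup_{|t|\le|h|}|\nabla_x K_N(x+t,y)|$, split $w^{1/2}w^{-1/2}$ and apply the Cauchy--Schwarz inequality; since $\nabla_x K_N$ is again of Schwartz type and supported in $|x|$ bounded, one is reduced to showing that $\int_{|x|\le C_N}\big(\int|\nabla_x K_N(x,y)|^2 w(y)^{-1}\,dy\big)w(x)\,dx$ stays bounded as $[w]_{A_2}$ ranges over $[1,K]$. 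Because $w\in A_2$ forces both $w$ and $w^{-1}$ to grow at most polynomially with constant $[w]_{A_2}$ (inequality \eqref{eq:prop-Ap-set}), the rapid decay of $\nabla_x K_N$ in $y$ absorbs this growth and the resulting estimate is of exactly the same nature as \eqref{E1}--\eqref{E2} in the proof of Theorem~\ref{CommCompa}; I would treat it the same way, obtaining a bound $O(|h|)$ uniform over $\{[w]_{A_2}\le K\}$.

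With each $T_{\sigma_N}$ uniformly compact and $T_{\sigma_N}\to T_\sigma$ uniformly, both with respect to $A_2$, Remark~\ref{3.4} gives that $T_\sigma:L^2(w)\to L^2(w)$ is uniformly compact with respect to $A_2$, and Theorem~\ref{ext-lp} completes the proof. The main obstacle is the last step of the previous paragraph: turning the Schwartz decay of $K_N$ into the weighted Fr\'echet--Kolmogorov bound for condition (3) with an \emph{explicit} dependence on $[w]_{A_2}$ --- this mirrors the delicate estimates \eqref{E1}--\eqref{E2}. A secondary technical point is that the weighted $L^2$ boundedness of $S^0_{1,0}$ operators (equivalently, of their Calder\'on--Zygmund realizations) must be used in its quantitative form, with the dependence of the operator norm on both $[w]_{A_2}$ and the symbol seminorms made explicit, so that the approximation in the second step is genuinely uniform in $w$.
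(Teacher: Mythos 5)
Your proposal is correct and follows essentially the same route as the paper, which cites \cite{cst:cst} and sketches exactly this argument right after Theorem \ref{pseudos}: approximate $T_\sigma$ by uniformly compact (truncated-symbol, Calder\'on--Zygmund) operators, show the approximation is uniform on $\{[w]_{A_2}\le K\}$ through quantitative weighted bounds controlled by the vanishing symbol seminorms, use the $L^p$ analogue of Remark \ref{3.4}, and conclude by compact extrapolation. The only cosmetic difference is the order of operations: the paper extrapolates the uniformly compact approximants $T_j$ to all $A_p$ and then passes to the limit in each $L^p(w)$ via the Fefferman--Stein-type estimate, whereas you pass to the limit at $p_0=2$ and then apply Theorem \ref{ext-lp} to $T_\sigma$ itself; both are valid.
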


Although the authors stated the conclusion as $T_\sigma$ being compact, it is actually uniformly compact with respect to $A_2$. In fact, the proof of the theorem involves the construction of a sequence of compact Calder\'on-Zygmund operators $\{T_j\}_j$ converging to $T_\sigma$ in the ``Calder\'on-Zygmund norm" defined by
$$
\|T\|_{CZO} = \|T\|_{L^2} + C_K,
$$
where $C_K$ is the best constant in \eqref{regularity}. Moreover the $T_j$ were proved to be uniformly compact with respect to $A_2$ and hence by compact extrapolation uniformly compact with respect to all $A_p$, $1<p<\infty$.   However, it was also shown using the Fefferman-Stein inequality, see \cite[Remark 2.2 ]{cst:cst},  that 
$$ \int_{\rd} |(T_\sigma-T_j)({ f})(x)|^p w(x)\, dx 
  \lesssim \, \|T_\sigma-T_j\|_{CZO}\, \int_{\rd}     {M}({ f})(x)^{p} w(x)\, dx.
$$
It follows then than $T_j \to T_\sigma$ uniformly with respect to $A_p$. By Remark \ref{3.4} (analogously applied to $L^p$ instead of $L^pL^q$) it follows that $T_\sigma$ is uniformly compact in $L^p(w)$ with respect to $A_p$ for all $1<p<\infty$.
As a corollary of  Theorem \ref{main} we then obtain from Theorem \ref{pseudos} the following result.
\begin{theorem}
$T_\sigma$ is uniformly in compact on $L^p_uL^q_v$ for all $1<p,q<\infty$, $u\in A_p$, and $v\in A_q$.
\end{theorem}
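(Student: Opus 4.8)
The plan is to obtain the statement as an immediate corollary of Theorem \ref{main}, since the hypotheses have essentially been checked in the discussion preceding the statement. Writing $d=n+m$, I would first recall that the symbol conditions \eqref{conditions} place $T_\sigma$ in the setting of Theorem \ref{pseudos}, and that the construction underlying that theorem produces a sequence $\{T_j\}_j$ of compact Calder\'on--Zygmund operators with $\|T_\sigma-T_j\|_{CZO}\to 0$, each $T_j$ being uniformly compact on $L^2(w)$ with respect to $A_2(\rd)$.

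Second, I would promote this to uniform compactness on $L^{p_0}(w)$ for a single convenient exponent, say $p_0=2$. For this I would combine the Fefferman--Stein estimate quoted in the excerpt,
$$
\int_{\rd} |(T_\sigma-T_j)(f)(x)|^{p_0} w(x)\, dx \lesssim \|T_\sigma-T_j\|_{CZO}\,\int_{\rd} M(f)(x)^{p_0}\, w(x)\, dx,
$$
with Muckenhoupt's theorem, i.e. the boundedness of $M$ on $L^{p_0}(w)$ with operator norm bounded by an increasing function of $[w]_{A_{p_0}}$. This yields that $T_j\to T_\sigma$ uniformly with respect to $A_{p_0}$ in the sense of Remark \ref{3.4} (in its $L^{p_0}$ form), and hence, since each $T_j$ is uniformly compact with respect to $A_2(\rd)$, that $T_\sigma$ itself is uniformly compact on $L^2(w)$ with respect to $A_2(\rd)$.

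Third and finally, I would simply apply Theorem \ref{main} with $p_0=2$: this gives that $T_\sigma: L^p_uL^q_v\to L^p_uL^q_v$ is uniformly compact with respect to $(A_p,A_q)$ for all $1<p,q<\infty$, $u\in A_p(\rn)$, $v\in A_q(\mathbb R^m)$, which is precisely the assertion. The only mildly delicate point is the one already addressed in the text before the statement: one must ensure the approximation $T_j\to T_\sigma$ is \emph{uniform over the weight classes}, not merely for a fixed weight. That uniformity is exactly what the Fefferman--Stein inequality combined with the quantitative $L^{p_0}(w)$ bound for $M$ provides, the implicit constant depending on $[w]_{A_{p_0}}$ only through an increasing function; once it is in hand, no further computation is needed and the theorem follows directly from Theorem \ref{main}.
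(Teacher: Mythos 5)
Your proposal is correct and follows essentially the same route as the paper: establish that $T_\sigma$ is uniformly compact on $L^2(w)$ with respect to $A_2(\mathbb R^{n+m})$ by combining the uniform compactness of the approximating Calder\'on--Zygmund operators $T_j$ with the Fefferman--Stein/maximal-function estimate showing $T_j\to T_\sigma$ uniformly over the weight class (Remark \ref{3.4} in its $L^p$ form), and then invoke Theorem \ref{main}. The only cosmetic difference is that the paper records the uniform compactness of $T_\sigma$ in $L^p(w)$ for all $1<p<\infty$ before extrapolating, whereas you fix $p_0=2$, which is all that Theorem \ref{main} requires.
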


{\bf Acknowledgement.} We would like to thank E. Lorist and Z. Nieraeth for valuable comments pointing out references \cite{GZ:GZ} and \cite{LN:LN}, which we overlooked on a first version of this work.

\end{document}